\numberwithin{equation}{subsection}
\newtheorem{theorem}{Theorem}[section]
\newtheorem{corollary}[theorem]{Corollary}
\newtheorem{lemma}[theorem]{Lemma}
\newtheorem{definition-proposition}[theorem]{Definition-Proposition}
\theoremstyle{definition}
\newtheorem{definition}[theorem]{Definition}
\newtheorem{remark}[theorem]{Remark}
\newcommand{\ca}{{\mathcal A}}
\newcommand{\cc}{{\mathcal C}}
\newcommand{\cd}{\mathsf{D}}
\newcommand{\cf}{{\mathcal F}}
\newcommand{\ch}{{\mathcal H}}
\newcommand{\ck}{\mathsf{K}}
\newcommand{\cm}{{\mathcal M}}
\newcommand{\ct}{{\mathcal T}}
\newcommand{\cs}{{\mathcal S}}
\newcommand{\cp}{{\mathcal P}}
\newcommand{\cu}{{\mathcal U}}
\newcommand{\cx}{{\mathcal X}}
\newcommand{\cy}{{\mathcal Y}}
\newcommand{\cz}{{\mathcal Z}}
\renewcommand{\SS}{{\mathcal S}}
\newcommand{\Z}{\mathbb{Z}}
\newcommand{\bo}{\operatorname{b}\nolimits}
\renewcommand{\mod}{\mathsf{mod}}
\newcommand{\proj}{\mathsf{proj}}
\newcommand{\CM}{\mathsf{CM}}
\newcommand{\per}{\mathsf{per}}
\newcommand{\Ext}{\operatorname{Ext}\nolimits}
\newcommand{\Hom}{\operatorname{Hom}\nolimits}
\newcommand{\End}{\operatorname{End}\nolimits}
\newcommand{\op}{\operatorname{op}\nolimits}
\newcommand{\RHom}{\mathbf{R}\strut\kern-.2em\operatorname{Hom}\nolimits}
\newcommand{\Cok}{\operatorname{Cok}\nolimits}
\newcommand{\twoctilt}{2\strut\kern-.2em\operatorname{-ctilt}\nolimits}
\newcommand{\dctilt}{{\it d}\strut\kern-.2em\operatorname{-ctilt}\nolimits}
\newcommand{\thick}{\mathsf{thick}}
\newcommand{\add}{\mathsf{add}}
\begin{document}

\title[Quotients of triangulated categories]{Quotients of triangulated categories and\\
Equivalences of Buchweitz, Orlov and Amiot--Guo--Keller}

\author{Osamu Iyama}
\address{O. Iyama: Graduate School of Mathematics, Nagoya University, Chikusa-ku, Nagoya, 464-8602 Japan}
\email{iyama@math.nagoya-u.ac.jp}

\author{Dong Yang}
\address{D. Yang: Department of Mathematics, Nanjing University, 22 Hankou Road, Nanjing 210093, P. R. China}
\email{yangdong@nju.edu.cn}
\date{\today}
\begin{abstract}
We give a simple sufficient condition for a Verdier quotient $\ct/\cs$ of a triangulated category $\ct$ by a thick subcategory $\cs$ to be realized inside of $\ct$ as an ideal quotient.
As applications, we deduce three significant results by Buchweitz, Orlov and Amiot--Guo--Keller.\\
{\bf Key words}: Verdier quotient, ideal quotient, torsion pair, Cohen--Macaulay module, cluster category.\\
{\bf MSC 2010:} 18E30, 16E35, 13C14, 14F05.
\end{abstract}
\maketitle

\section{Introduction}
Triangulated categories are ubiquitous in mathematics, appearing in various areas such as representation theory, algebraic geometry, algebraic topology and mathematical physics. A fundamental tool to construct new triangulated categories from given ones is to take Verdier quotients, for example, derived categories are certain Verdier quotients of homotopy categories. However, Verdier quotient categories are in general  hard to understand because taking Verdier quotients could drastically change morphisms. Generally it is even hard to know whether morphisms between two objects in a Verdier quotient form sets.

The first aim of this paper is to give a simple sufficient condition for a Verdier quotient $\ct/\cs$ of a triangulated category $\ct$ by a thick subcategory $\cs$ to be realized inside of $\ct$ as an ideal quotient $\cz/[\cp]$ for certain explicitly constructed full subcategories $\cz\supset\cp$ of $\ct$ (Theorem \ref{equivalence}).
Such a realization is very helpful in studying $\ct/\cs$ since the morphism sets in the ideal quotient $\cz/[\cp]$ are very easy to control. For example, in this case if $\ct$ is Hom-finite over a field and Krull--Schmidt, so is $\ct/\cs$.
The second aim of this paper is to show that the following three significant results can be regarded as special cases of our realization.
\begin{enumerate}[$\bullet$]
\item The first one is Buchweitz's equivalence \cite{Buchweitz87,Rickard,KellerVossieck} between the singularity category of an Iwanaga--Gorenstein ring  and the stable category of Cohen--Macaulay modules over the ring. In fact, we recover the silting reduction introduced in \cite{IyamaYang} more generally (Corollaries \ref{silting reduction}, \ref{buchweitz}).
\item The second one is Orlov's theorem \cite{Orlov} relating the graded singularity category of a $\mathbb{Z}$-graded Iwanaga--Gorenstein ring and the derived category of the corresponding noncommutative projective scheme (Corollary \ref{orlov3}). It gives a direct connection between projective geometry and Cohen--Macaulay representations.
\item The third one is Amiot--Guo--Keller's equivalence \cite{Amiot09,Guolingyan11a,IyamaYang} which plays an important role in the categorification of Fomin--Zelevinsky's cluster algebras \cite{Keller}. It realizes the cluster category as the fundamental domain in the perfect derived category of Ginzburg dg algebras 
(Corollary \ref{amiot-guo-keller-CY}).
\end{enumerate}

In fact, the third application is given in a wider setting. We introduce the notion of a \emph{relative Serre quadruple} as a certain pair of a nice triangulated category $\ct$ and its thick subcategory $\cs$ with extra data (Definition~\ref{serre quadruple}). Then the corresponding \emph{AGK category} is defined as the Verdier quotient $\ct/\cs$ (Definition~\ref{define amiot-guo-keller}).
This is a wide generalization of cluster categories, and we prove that AGK category is equivalent to the \emph{fundamental domain}, a certain full subcategory of $\ct$ given explicitly (Theorem~\ref{amiot-guo-keller}).

\subsection{Preliminaries}

Let $\ct$ be a triangulated category, and $\cx$ and $\cy$ full subcategories of $\ct$. We denote by $\cx*\cy$ the full subcategory of $\ct$ consisting of objects $T\in\ct$ such that there exists a triangle $X\to T\to Y\to X[1]$ with $X\in\cx$ and $Y\in\cy$. When $\Hom_{\ct}(\cx,\cy)=0$ holds, we write $\cx*\cy=\cx\perp\cy$. For full subcategories $\cx_1,\ldots,\cx_n$, we define $\cx_1*\cdots*\cx_n$ and $\cx_1\perp\cdots\perp\cx_n$ inductively. 
We write
\[\cx^\perp:=\{T\in\ct\mid\Hom_{\ct}(\cx,T)=0\}\ \mbox{ and }\ {}^\perp\cx:=\{T\in\ct\mid\Hom_{\ct}(T,\cx)=0\}.\]
When $\ct=\cx\perp\cy$, $\cx={}^\perp\cy$ and $\cy=\cx^\perp$ hold, we say that $\ct=\cx\perp\cy$ is a \emph{torsion pair} of $\ct$ \cite{IyamaYoshino08}.
If a torsion pair $\ct=\cx\perp\cy$ satisfies $\cx[1]\subset\cx$ (respectively, $\cx[1]\supset\cx$, $\cx[1]=\cx$), then we call it a \emph{t-structure} \cite{BeilinsonBernsteinDeligne} (respectively, \emph{co-t-structure} \cite{Pauksztello08,Bondarko10}, \emph{stable t-structure} \cite{Miyachi}). 
In the literature a t-structure (respectively, co-t-structure) usually means the pair $(\cx,\cy[1])$ (respectively, $(\cx,\cy[-1]))$. Our convention is more suitable for our purpose.
If $\ct=\cx_1\perp\cdots\perp\cx_n$ for thick subcategories $\cx_1,\ldots,\cx_n$ of $\ct$, we say that $\ct=\cx_1\perp\cdots\perp\cx_n$
is a \emph{weak semi-orthogonal decomposition} of $\ct$ \cite{Orlov}.
Note that it is often written as $\langle\cx_n,\ldots,\cx_1\rangle$ in the literature \cite{Huybrechts}.

\subsection{Main results}

Our main result is given under the following very simple axioms.

\begin{itemize}
\item[(T0)] $\ct$ is a triangulated category, $\cs$ is a thick subcategory of $\ct$ and $\cu=\ct/\cs$.
\item[(T1)] $\cs$ has a torsion pair $\cs=\cx\perp\cy$.
\item[(T2)] $\ct$ has torsion pairs $\ct=\cx\perp\cx^\perp={}^\perp\cy\perp\cy$.
\end{itemize}
Notice that $\cx$ and $\cy$ are not necessarily triangulated subcategories of $\ct$, and this is important in our applications. In this setting, we define full subcategories of $\ct$ by
\[\cz:=\cx^\perp\cap{}^\perp\cy[1]\ \mbox{ and }\ \cp:=\cx[1]\cap\cy.\]
We denote by $\cz/[\cp]$ the additive category with the same objects as $\cz$ and
\[\Hom_{\cz/[\cp]}(X,Y)=\Hom_{\ct}(X,Y)/[\cp](X,Y)\]
for $X,Y\in\cz$, where $[\cp](X,Y)$ is the subgroup of $\Hom_{\ct}(X,Y)$ which consists of morphisms factoring through objects in $\cp$.

The first main result in this paper enables us to realize the Verdier quotient $\cu=\ct/\cs$ as the ideal quotient $\cz/[\cp]$.

\begin{theorem}\label{equivalence}
Under the assumptions {\rm(T0)}, {\rm(T1)} and {\rm(T2)}, the composition $\cz\subset\ct\to\cu$ of natural functors
induces an equivalence of additive categories $\cz/[\cp]\simeq\cu$.
In particular, the category $\cz/[\cp]$ has a structure of a triangulated category.
\end{theorem}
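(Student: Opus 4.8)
The plan is to prove that the induced additive functor $F\colon\cz\to\cu$ (the restriction of the Verdier quotient functor $\ct\to\cu$) is dense, that it sends to zero every morphism factoring through an object of $\cp$ — so it factors through an additive functor $\bar F\colon\cz/[\cp]\to\cu$ — and that $\bar F$ is full and faithful; since a full, faithful and dense additive functor is an equivalence, this gives the theorem, and the triangulated structure on $\cz/[\cp]$ is then transported from $\cu=\ct/\cs$. Three elementary facts will be used throughout: (a) since $\cs=\cx\perp\cy$ is a torsion pair of $\cs$, so is $\cs=\cx[n]\perp\cy[n]$ for every $n$, so each $S\in\cs$ lies in triangles $X_S\to S\to Y_S\to X_S[1]$ and $X'[1]\to S\to Y'[1]\to X'[2]$ with $X_S,X'\in\cx$ and $Y_S,Y'\in\cy$; (b) shifting $\ct={}^\perp\cy\perp\cy$ yields a torsion pair $\ct={}^\perp(\cy[1])\perp\cy[1]$, as $({}^\perp\cy)[1]={}^\perp(\cy[1])$; (c) a morphism between objects of $\cz$ vanishes in $\cu$ if and only if it factors through an object of $\cs$ (standard calculus of fractions). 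I will also isolate two factorization lemmas: (i) every morphism $S\to Z$ with $S\in\cs$, $Z\in\cz$ factors through an object of $\cy$, obtained by applying $\Hom_{\ct}(-,Z)$ to $X_S\to S\to Y_S\to X_S[1]$ and using $\Hom_{\ct}(\cx,Z)=0$ (valid since $\cz\subseteq\cx^\perp$); and dually (ii) every morphism $Z\to S$ with $Z\in\cz$, $S\in\cs$ factors through an object of $\cx[1]$, using $\Hom_{\ct}(Z,-)$ on $X'[1]\to S\to Y'[1]\to X'[2]$ and $\Hom_{\ct}(Z,\cy[1])=0$ (valid since $\cz\subseteq{}^\perp(\cy[1])$).

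For density, given $M\in\ct$ I would first truncate with $\ct=\cx\perp\cx^\perp$ to get a triangle $X\to M\to M_1\to X[1]$ with $X\in\cx\subseteq\cs$ and $M_1\in\cx^\perp$, so $M\cong M_1$ in $\cu$; then truncate $M_1$ with $\ct={}^\perp(\cy[1])\perp\cy[1]$ to get $M_2\to M_1\to Y\to M_2[1]$ with $Y\in\cy[1]\subseteq\cs$ and $M_2\in{}^\perp(\cy[1])$, so $M_1\cong M_2$ in $\cu$; finally, rotating the last triangle and applying $\Hom_{\ct}(\cx,-)$, together with $\Hom_{\ct}(\cx,\cy)=0$ and $M_1\in\cx^\perp$, gives $M_2\in\cx^\perp$, so $M_2\in\cz$ and $M\cong M_2$ in $\cu$.

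For fullness, I would represent a morphism $\alpha\in\Hom_{\cu}(FZ,FZ')$ by a roof $Z\xleftarrow{s}W\xrightarrow{g}Z'$ sitting in a triangle $S\to W\xrightarrow{s}Z\xrightarrow{\delta}S[1]$ with $S\in\cs$. By lemma (ii), $\delta=a\delta'$ for some $\delta'\colon Z\to X'[1]$ and $a\colon X'[1]\to S[1]$ with $X'\in\cx$. Completing $\delta'$ to a triangle $X'\xrightarrow{j}W'\xrightarrow{s'}Z\xrightarrow{\delta'}X'[1]$ and applying TR3 produces $\phi\colon W'\to W$ with $s\phi=s'$; the octahedral axiom on $W'\xrightarrow{\phi}W\xrightarrow{s}Z$ puts the cone of $\phi$ in a triangle with $X'[1]$ and $S[1]$, so it lies in $\cs$, and therefore the roof $Z\xleftarrow{s'}W'\xrightarrow{g\phi}Z'$ again represents $\alpha$, now with the cone of $s'$ equal to $X'[1]\in\cx[1]$. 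Since $\Hom_{\ct}(\cx,Z')=0$, the composite $(g\phi)j\colon X'\to Z'$ vanishes, so applying $\Hom_{\ct}(-,Z')$ to $X'\xrightarrow{j}W'\xrightarrow{s'}Z\to X'[1]$ yields $h\colon Z\to Z'$ with $g\phi=hs'$; then the roof $(s',g\phi)=(s',hs')$ represents $F(h)$, so $\alpha=F(h)$.

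For faithfulness it suffices to show that a morphism $f\colon Z\to Z'$ with $Z,Z'\in\cz$ factoring through $\cs$ in fact factors through $\cp$; the converse is trivial since $\cp\subseteq\cx[1]\cap\cy\subseteq\cs$, so by (c) this identifies the kernel of $F$ on $\Hom_{\ct}(Z,Z')$ with $[\cp](Z,Z')$. Write $f=vu$ with $u\colon Z\to S$, $v\colon S\to Z'$, $S\in\cs$, and take the triangle $X_S\xrightarrow{p}S\xrightarrow{q}Y_S\to X_S[1]$; then $vp=0$ because $\Hom_{\ct}(\cx,Z')=0$, so $v=\tilde vq$ and $f=\tilde v(qu)$ factors through $Y_S\in\cy$. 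Next take the triangle $P\xrightarrow{p_1}Y_S\xrightarrow{q_1}Y_1[1]\to P[1]$ giving the $\cx[1]\perp\cy[1]$-decomposition of $Y_S\in\cs$ (so $P\in\cx[1]$, $Y_1\in\cy$); then $q_1(qu)=0$ because $\Hom_{\ct}(Z,\cy[1])=0$, so $qu=p_1u_2$ and $f$ factors through $P$. Finally, rotating $P\to Y_S\to Y_1[1]\to P[1]$ to $Y_1\to P\to Y_S\to Y_1[1]$ and applying $\Hom_{\ct}(\cx,-)$, the vanishing of $\Hom_{\ct}(\cx,Y_1)$ and $\Hom_{\ct}(\cx,Y_S)$ forces $\Hom_{\ct}(\cx,P)=0$; since $P\in\cx[1]\subseteq\cs$ this gives $P\in\cx^\perp\cap\cs=\cy$, hence $P\in\cx[1]\cap\cy=\cp$, and $f$ factors through $\cp$. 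I expect this faithfulness step to be the main obstacle — one must upgrade ``factors through $\cs$'' to ``factors through the much smaller $\cp=\cx[1]\cap\cy$'', and this is exactly where the precise shape of $\cp$ is forced and where the two torsion decompositions of $\cs$ must be applied in the correct order; the fullness reduction (replacing the cone of $s$ by an object of $\cx[1]$ so that $\Hom_{\ct}(\cx,Z')=0$ applies) is the other delicate point, while density and the descent of $F$ through $\cz/[\cp]$ are routine.
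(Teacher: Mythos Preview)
Your proof is correct and follows essentially the same route as the paper's: density via two successive truncations using (T2), fullness by replacing the cone of the denominator in a fraction by an object of $\cx[1]$ (so that $\cz\subset\cx^\perp$ applies), and faithfulness by pushing a factorization through $\cs$ first into $\cy$ and then into $\cx[1]\cap\cy=\cp$. The differences are cosmetic: you use left roofs where the paper uses right roofs, you swap the order of the two truncations in the density step, and in faithfulness you inline the observation that the $\cx[1]$-part of the $(\cx[1]\perp\cy[1])$-decomposition of an object of $\cy$ already lies in $\cy$ (by extension closure), which the paper records separately as the inclusion $\cy\subset\cp\perp\cy[1]$.
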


Note that if $\cs=\cx\perp\cy$ is a t-structure, then $\cp=0$. In Section~\ref{apply Amiot-Guo-Keller}, we apply Theorem \ref{equivalence} to AGK categories. In particular, we deduce Amiot--Guo--Keller's equivalence (Corollary \ref{amiot-guo-keller-CY}) from Theorem~\ref{equivalence}.

\medskip
Next we consider the following special case of  (T1).
\begin{itemize}
\item[(T1$^\prime$)] $\cs=\cx\perp\cy$ is a co-t-structure.
\end{itemize}
In this case, we have the following direct description of the triangulated structure of $\cz/[\cp]$ , which is an analog of the triangulated structures introduced in \cite{Happel,IyamaYoshino08} in different settings.

\begin{theorem}\label{triangle equivalence}
Under the assumptions {\rm(T0)}, {\rm(T1$^\prime$)} and {\rm(T2)}, we have the following.
\begin{itemize}
\item[(a)] The shift functor and triangles of the triangulated category $\cz/[\cp]$ are the following.
\begin{itemize}
\item[$\bullet$] For $X\in\cz$, we take a triangle
\[X\xrightarrow{\iota_X}P_X\to X\langle1\rangle\to X[1]\]
with a (fixed) left $\cp$-approximation $\iota_X$.
Then $\langle1\rangle$ gives a well-defined auto-equivalence of $\cz/[\cp]$, which is the shift functor of $\cz/[\cp]$.
\item[$\bullet$] For a triangle $X\xrightarrow{f} Y\xrightarrow{g} Z\xrightarrow{h}X[1]$ in $\ct$
with $X,Y,Z\in\cz$, take the following commutative diagram of triangles:
\[
\xymatrix{
X\ar[r]^f\ar@{=}[d]&Y\ar[r]^g\ar[d]&Z\ar[r]^h\ar[d]^a&X[1]\ar@{=}[d]\\
X\ar[r]^{\iota_X}&P_X\ar[r]&X\langle1\rangle\ar[r]&X[1]
}\]
The triangles in $\cz/[\cp]$ are the diagrams which is isomorphic to the image of $X\xrightarrow{f}Y\xrightarrow{g}Z\xrightarrow{a}X\langle1\rangle$ in $\cz/[\cp]$.
\end{itemize}
\item[(b)] We have $\ct=\cx\perp\cz\perp\cy[1]$.
\end{itemize}
\end{theorem}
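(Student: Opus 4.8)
The plan is to build on Theorem~\ref{equivalence}: under (T0), (T1$'$), (T2) we already know $\cz/[\cp]\simeq\cu=\ct/\cs$ as additive categories, so the triangulated structure on $\cz/[\cp]$ is the one transported from $\cu$ along this equivalence. Everything reduces to describing that transported structure intrinsically inside $\ct$. First I would establish part~(b): the decomposition $\ct=\cx\perp\cz\perp\cy[1]$. Since $\cs=\cx\perp\cy$ is a co-t-structure we have $\cx[1]\supset\cx$, hence $\cy[1]\subset\cy$ and in particular $\Hom_\ct(\cz,\cy[1])\subset\Hom_\ct({}^\perp\cy[1],\cy[1])=0$ and $\Hom_\ct(\cx,\cz)\subset\Hom_\ct(\cx,\cx^\perp)=0$, so the Hom-vanishing for a weak semi-orthogonal decomposition holds. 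For the filtration $\ct=\cx*\cz*\cy[1]$, start from $\ct={}^\perp\cy\perp\cy$ in (T2): given $T\in\ct$, factor as $T'\to T\to Y''$ with $T'\in{}^\perp\cy$, $Y''\in\cy$; then use $\ct=\cx\perp\cx^\perp$ to split $T'$ as $X'\to T'\to Z'$ with $X'\in\cx$, $Z'\in\cx^\perp$, and check $Z'\in{}^\perp\cy[1]$ so $Z'\in\cz$ — here the octahedral axiom assembles the three pieces, and the co-t-structure shift condition on $\cx$, $\cy$ is exactly what makes $Z'$ land in ${}^\perp\cy[1]$. Finally $\cz$ is closed under the relevant extensions so $\cz*\cy[1]$, etc., are well defined; since $\cx,\cy$ are thick in $\ct$ (being a co-t-structure of the thick $\cs$, one checks $\cx=\cx*\cx$ is closed under summands), the subcategories are as required.

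Next, for part~(a), the shift: given $X\in\cz$, a left $\cp$-approximation $\iota_X\colon X\to P_X$ exists because $\cp=\cx[1]\cap\cy$ and, using (b) applied to $X[1]\in\ct=\cx\perp\cz\perp\cy[1]$, the $\cy[1]$-part of $X[1]$ together with the co-t-structure gives a $\cp$-approximation after a shift; I would spell this out by writing $X[1]\in\cx[1]*\cz[1]*\cy[2]$ and extracting the map into the $\cx[1]$-component, which lies in $\cp$ up to the $\cy$-condition. One then completes $\iota_X$ to a triangle $X\to P_X\to X\langle1\rangle\to X[1]$ and verifies $X\langle1\rangle\in\cz$: it sits in a triangle with $\cp\subset\cy$ on the left and $X\in{}^\perp\cy[1]\subset{}^\perp\cy$-type conditions, and a diagram chase with the Hom-vanishings of (b) shows $X\langle 1\rangle\in\cx^\perp\cap{}^\perp\cy[1]$. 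Independence of the choice of $\iota_X$ up to isomorphism in $\cz/[\cp]$, and functoriality, follow the standard pattern for approximation-defined functors (as in \cite{Happel,IyamaYoshino08}): two $\cp$-approximations differ by a map factoring through $\cp$, giving an isomorphism in $\cz/[\cp]$ between the cones. That $\langle1\rangle$ agrees with the shift of $\cu$ under the equivalence of Theorem~\ref{equivalence} comes from the fact that in $\ct/\cs$ the object $P_X$ becomes zero (it lies in $\cp\subset\cs$), so the triangle $X\to P_X\to X\langle1\rangle\to X[1]$ becomes the isomorphism $X\langle1\rangle\simeq X[1]$ in $\cu$.

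For the description of triangles, I would take a triangle $X\xrightarrow{f}Y\xrightarrow{g}Z\xrightarrow{h}X[1]$ in $\ct$ with $X,Y,Z\in\cz$, form the commutative diagram over $X\xrightarrow{\iota_X}P_X\to X\langle1\rangle\to X[1]$ with connecting morphism $a\colon Z\to X\langle1\rangle$, and show that the image of $X\xrightarrow{f}Y\xrightarrow{g}Z\xrightarrow{a}X\langle1\rangle$ in $\cz/[\cp]$ corresponds, under the equivalence with $\cu$, to the image of the original triangle in $\ct/\cs$ (this uses that $P_X\in\cs$ kills the lower row appropriately, identifying $a$ with $h$ after the identification $X\langle1\rangle\simeq X[1]$). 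Conversely, any triangle in $\cu$ comes from a triangle in $\ct$, and by the surjectivity/replacement statements underlying Theorem~\ref{equivalence} one can choose representatives with all three terms in $\cz$; this gives the converse inclusion. The main obstacle I anticipate is the bookkeeping for part~(b): correctly invoking the octahedral axiom to merge the two torsion-pair decompositions of (T2) into a three-step filtration while verifying the middle term genuinely lands in $\cz=\cx^\perp\cap{}^\perp\cy[1]$ (rather than just $\cx^\perp$), which is where the co-t-structure hypothesis (T1$'$), as opposed to a general torsion pair (T1), is essential.
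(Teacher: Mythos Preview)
Your overall strategy---lean on Theorem~\ref{equivalence}, then identify the transported triangulated structure with the approximation-based one of \cite{IyamaYoshino08}---is the paper's as well. Two of your steps, however, do not go through as written.

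For (b), your two-stage decomposition via $\ct={}^\perp\cy\perp\cy$ and then $\ct=\cx\perp\cx^\perp$ produces $T\in\cx*\cz*\cy$, with third term in $\cy$, not $\cy[1]$; and $\Hom_\ct(\cz,\cy)$ is \emph{not} zero in general (indeed $\cp\subset\cz\cap\cy$), so this is not the claimed orthogonal decomposition. The paper sidesteps any octahedron: it proves the single equality ${}^\perp\cy[1]=\cx\perp\cz$. One inclusion is $\cx\subset{}^\perp\cy\subset{}^\perp\cy[1]$ (this is exactly where (T1$'$) enters); for the other, given $T\in{}^\perp\cy[1]$ one takes the triangle $X\to T\to T'\to X[1]$ from $\ct=\cx\perp\cx^\perp$ and observes $X[1]\in\cx[1]\subset{}^\perp\cy[1]$, so $T'\in\cx^\perp\cap{}^\perp\cy[1]=\cz$. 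Combined with the shifted torsion pair $\ct={}^\perp\cy[1]\perp\cy[1]$ this gives (b). (Minor point: $\cx$ and $\cy$ are not thick---only closed under extensions and direct summands.)

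For (a), the heart of the matter is the existence of left and right $\cp$-approximations of every $Z\in\cz$, and your proposal to extract one from a decomposition of $X[1]$ in $\cx[1]*\cz[1]*\cy[2]$ does not visibly yield a map $X\to P$ with $P\in\cp$. The paper's construction is direct: apply $\ct=\cx\perp\cx^\perp$ to $Z[-1]$ and rotate to a triangle $T\to X[1]\to Z\to T[1]$ with $X\in\cx$ and $T\in\cx^\perp$; the co-t-structure condition $\cy[2]\subset\cy[1]$ then forces $\Hom_\ct(T,\cy[1])=0$, so $T\in\cz$, and hence the middle term $X[1]\in\cx[1]\cap\cz\subset\cs\cap\cz=\cp$. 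Together with the easy vanishings $\Hom_\ct(\cp,\cz[1])=0=\Hom_\ct(\cz,\cp[1])$, this shows $(\cz,\cz)$ is a $\cp$-mutation pair, so \cite[Theorem~4.2]{IyamaYoshino08} furnishes the triangulated structure on $\cz/[\cp]$ in precisely the stated form; one then checks, as you say, that the equivalence of Theorem~\ref{equivalence} is a triangle functor.
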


In Section~\ref{apply Buchweitz}, we deduce Buchweitz's equivalence (Corollary~\ref{buchweitz}) and the silting reduction (Corollary~\ref{frobenius case}) from Theorem~\ref{triangle equivalence}.

\medskip
Finally we consider the following further special case of (T1$^\prime$).
\begin{itemize}
\item[(T1$^{\prime\prime}$)] $\cs=\cx\perp\cy$ is a stable t-structure.
\end{itemize}
In this case, $\cx$, $\cy$ and $\cz$ are thick subcategories of $\ct$, and $\cp=0$ holds.
As a consequence, we deduce the following result immediately.

\begin{corollary}\label{triangle equivalence2}
Under the assumptions {\rm(T0)}, {\rm(T1$^{\prime\prime}$)} and {\rm(T2)}, we have a weak semi-orthogonal decomposition $\ct=\cx\perp\cz\perp\cy$. In particular, the composition $\cz\subset\ct\to\cu$ is a triangle equivalence.
\end{corollary}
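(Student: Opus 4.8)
The plan is to read off both assertions directly from Theorems~\ref{equivalence} and~\ref{triangle equivalence}, after checking that under (T1$^{\prime\prime}$) the data $\cp$ and $\cz$ degenerate as announced in the paragraph preceding the statement. First note that a stable t-structure is in particular a co-t-structure, so (T1$^{\prime\prime}$) implies (T1$^{\prime}$) and both theorems apply. Since $\cs=\cx\perp\cy$ is a stable t-structure we have $\cx[1]=\cx$, and hence $\cy[1]=\cy$ as well, because $\cy=\{s\in\cs\mid\Hom_{\cs}(\cx,s)=0\}$ is stable under the auto-equivalence $[1]$ of $\cs$. Consequently $\cz=\cx^{\perp}\cap{}^{\perp}\cy[1]=\cx^{\perp}\cap{}^{\perp}\cy$ and $\cp=\cx[1]\cap\cy=\cx\cap\cy$; since $\Hom_{\ct}(\cx,\cy)=\Hom_{\cs}(\cx,\cy)=0$, every object of $\cx\cap\cy$ has zero identity morphism and is therefore zero, so $\cp=0$.

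Next I would verify the required thickness. The subcategories $\cx$ and $\cy$ are closed under $[\pm1]$, under cones (rotating a triangle with two terms in $\cx$, the third lies in $\cx*\cx\subseteq\cx$, as a torsion class is closed under extensions), and under direct summands (they are perpendicular subcategories inside $\cs$); hence they are thick subcategories of $\ct$. Likewise $\cz=\cx^{\perp}\cap{}^{\perp}\cy$ is the intersection of the thick subcategories $\cx^{\perp}$ and ${}^{\perp}\cy$, hence thick; in particular $\cz$ is a triangulated subcategory of $\ct$ with the triangulated structure inherited from $\ct$.

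Granting this, the weak semi-orthogonal decomposition is immediate: Theorem~\ref{triangle equivalence}(b) gives $\ct=\cx\perp\cz\perp\cy[1]=\cx\perp\cz\perp\cy$, and since $\cx,\cz,\cy$ are thick this is by definition a weak semi-orthogonal decomposition. For the triangle equivalence, $\cp=0$ yields $\cz/[\cp]=\cz$, so by Theorem~\ref{equivalence} the composition $F\colon\cz\subset\ct\to\cu$ is an equivalence of additive categories; and $F$ is exact, being the composite of the inclusion $\cz\hookrightarrow\ct$ (exact, as $\cz$ is a triangulated subcategory) and the Verdier quotient functor $\ct\to\cu$ (exact). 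Since an exact equivalence of triangulated categories automatically has an exact quasi-inverse, $F$ is a triangle equivalence. Equivalently, when $\cp=0$ the explicit structure on $\cz/[\cp]$ of Theorem~\ref{triangle equivalence}(a) has $P_X=0$, so $\langle1\rangle=[1]$ and $a=h$, i.e.\ it is precisely the structure $\cz$ inherits from $\ct$.

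Since each step is a direct specialization of the general theorems, I do not expect a serious obstacle; the only point needing a moment's care is the bookkeeping that (T1$^{\prime\prime}$) collapses $\cp$ to zero and keeps $\cz$ closed under cones, which is exactly what allows one to upgrade the additive equivalence of Theorem~\ref{equivalence} into a triangle equivalence at no extra cost.
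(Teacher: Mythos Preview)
Your proof is correct and follows exactly the route the paper intends: it treats the corollary as an immediate specialization of Theorems~\ref{equivalence} and~\ref{triangle equivalence} once one observes (as the paper records just before the statement) that under (T1$^{\prime\prime}$) the subcategories $\cx,\cy,\cz$ are thick and $\cp=0$. You have simply supplied the details the paper leaves implicit, including the verification that $\cy[1]=\cy$, the thickness checks, and the reason the additive equivalence upgrades to a triangle equivalence.
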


In Section~\ref{apply Orlov}, we deduce Orlov's theorem (Corollary \ref{orlov3}) from Corollary~\ref{triangle equivalence2}.

\medskip
Let us explain the structure of this paper. Our main
Theorems \ref{equivalence} and \ref{triangle equivalence} will be proved in the last Section~\ref{s:proofs}. In the next Section~\ref{s:applications}, we deduce the three applications: Buchweitz's equivalence, Orlov's equivalences, and Amiot--Guo--Keller's equivalence.

During the preparation of this paper, the authors were informed that
there are analogous results to our Theorem \ref{equivalence} in different settings.
One is `Hovey's twin cotorsion pairs' due to Nakaoka \cite{Nakaoka},
and the other is `additive categories with additive endofunctors' due to Li \cite{Li16}.
Their arguments are more involved than ours since they give direct descriptions of the triangulated structure of $\cz/[\cp]$ in the setting of Theorem 1.1.
It will be interesting to have a closer look at the connections between these results.

\medskip\noindent{\bf Ackowledgements}
Results in this paper were presented at the conference ``Cluster Algebras and Geometry'' in M\"unster in March 2016. The first author thanks Karin Baur and Lutz Hille for their hospitality. He also thanks Hiroyuki Nakaoka for explaining his results. The authors thank Gustavo Jasso for pointing out an error in the proof of Corollary~\ref{frobenius case}.
He is supported by JSPS Grant-in-Aid for Scientific Research (B) 16H03923, (C) 23540045 and (S) 15H05738.
The second author is supported by the National Science Foundation of China No. 11401297.

\section{Applications of main results}\label{s:applications}

In this section we give three applications of Theorems~\ref{equivalence} and \ref{triangle equivalence} and Corollary~\ref{triangle equivalence2}.

\subsection{Silting reduction and Buchweitz's equivalence}\label{apply Buchweitz}

Tilting theory is powerful to control equivalences of derived categories, and silting objects/subcategories are central in tilting theory. It is shown in
\cite{IyamaYang,Wei15} that the Verdier quotient of a triangulated category by its thick subcategory with a silting subcategory can be realized as an ideal quotient (\emph{silting reduction}).
The aim of this subsection is to deduce silting reduction from our main Theorems~\ref{equivalence} and \ref{triangle equivalence}. 

Recall that a full subcategory $\cp$ of a triangulated category $\ct$ is \emph{presilting} if $\Hom_{\ct}(\cp,\cp[>\hspace{-3pt}0])=0$. A presilting subcategory $\cp$ is called \emph{silting} if the thick subcategory $\thick\cp$ generated by $\cp$ is $\ct$.

Now we assume the following.
\begin{itemize}
\item[(P0)] $\ct$ is a triangulated category, $\cp$ is a presilting subcategory of $\ct$ such that $\cp=\add\cp$, $\cs=\thick\cp$, and $\cu=\ct/\cs$.
\item[(P1)] $\cp$ is covariantly finite in ${}^\perp\cp[>\hspace{-3pt}0]$ and contravariantly finite in $\cp[<\hspace{-3pt}0]^{\perp}$.
\item[(P2)] For any $X\in\ct$, we have $\Hom_{\ct}(X,\cp[\ell])=0=\Hom_\ct(\cp,X[\ell])$ for $\ell\gg0$.
\end{itemize}
As a special case of our Theorems~\ref{equivalence} and \ref{triangle equivalence}, we recover the following silting reduction. 

\begin{corollary}[{\cite[Theorems 3.1 and 3.6]{IyamaYang}}]\label{silting reduction}
Under the assumptions {\rm(P0)}, {\rm(P1)} and {\rm(P2)},  let
\begin{eqnarray*}
&{\displaystyle\cx:=\bigcup_{i>0}\cp[-i]*\cdots*\cp[-2]*\cp[-1],}&\cy:=\bigcup_{i>0}\cp*\cp[1]*\cdots*\cp[i],\\
&\cz:=\cx^\perp\cap{}^\perp\cy[1]=\cp[<\hspace{-3pt}0]^\perp\cap{}^\perp\cp[>\hspace{-3pt}0].&
\end{eqnarray*}
Then the following assertion holds.
\begin{itemize}
\item[(a)] {\rm(T0)}, {\rm(T1$^\prime$)} and {\rm(T2)} in Theorem \ref{triangle equivalence} are satisfied.
\item[(b)] We have a triangle equivalence $\cz/[\cp]\simeq\cu$, where the structure of a triangulated category of $\cz/[\cp]$ is described in Theorem \ref{triangle equivalence}.
\item[(c)] We have $\ct=\cx\perp\cz\perp\cy[1]$.
\end{itemize}
\end{corollary}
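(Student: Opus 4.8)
The plan is to verify the abstract hypotheses (T0), (T1$^\prime$), (T2) and then simply quote Theorems \ref{equivalence} and \ref{triangle equivalence}; the real work is entirely in checking that the explicitly defined $\cx$ and $\cy$ do form the required torsion pairs, and identifying $\cz$ as stated. First I would record the easy consequences of presilting: since $\Hom_{\ct}(\cp,\cp[>0])=0$ and $\cp=\add\cp$, each union $\cp[-i]*\cdots*\cp[-1]$ is closed under extensions and the $*$-products telescope, so $\cx$ and $\cy$ are closed under extensions, $\cx[-1]\subset\cx$, $\cy[1]\subset\cy$, $\cp\subset\cx[1]\cap\cy$, and $\thick\cp=\cs$ is generated by $\cx\cup\cy$. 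A short computation with the orthogonality relations coming from presilting gives $\Hom_{\ct}(\cx,\cy)=0$, so $\cs\supset\cx\perp\cy$; the reverse inclusion $\cs\subset\cx*\cy$ is proved by the standard dévissage, writing any $S\in\thick\cp$ using finitely many shifts of $\cp$ and repeatedly using the octahedral axiom together with the vanishing $\Hom_{\ct}(\cx,\cy)=0$ to push all negative shifts to the left and all nonnegative shifts to the right. This yields (T1$^\prime$), once one also notes $\cx={}^\perp\cy$ and $\cy=\cx^\perp$ \emph{within} $\cs$, i.e. that $\cs=\cx\perp\cy$ is a co-t-structure of $\cs$ — here the key point is that $\cx[-1]\subset\cx$ forces $\cx = {}^{\perp_{\cs}}\cy$ and dually.

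Next, for (T2) I would use (P1) and (P2). Using that $\cp$ is covariantly finite in ${}^\perp\cp[>0]$, an iterated-approximation argument (the standard construction behind silting reduction, building a tower of triangles whose cones lie in successive $\cp[i]$'s) produces, for each $X\in\ct$, a triangle with left term in $\cx$ and right term in $\cx^\perp$; finiteness of the tower is exactly where (P2) is used, guaranteeing the process terminates. This gives $\ct=\cx\perp\cx^\perp$, and one checks ${}^\perp(\cx^\perp)=\cx$ using $\cx[-1]\subset\cx$ again, so it is a genuine torsion pair. The dual argument with contravariant finiteness of $\cp$ in $\cp[<0]^\perp$ gives $\ct={}^\perp\cy\perp\cy$. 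Together these are (T2). Along the way one also gets the identification $\cx^\perp = \cp[<0]^\perp$ and ${}^\perp\cy[1] = {}^\perp\cp[>0]$: the inclusions $\cx\supset\cp[<0]$-shifts and $\cy[1]\supset\cp[>0]$-shifts are trivial, and the reverse follows because $\cx$ (resp.\ $\cy[1]$) is built from those shifts by extensions, under which right (resp.\ left) orthogonality is preserved. Hence $\cz=\cx^\perp\cap{}^\perp\cy[1]=\cp[<0]^\perp\cap{}^\perp\cp[>0]$ as claimed.

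With (T0), (T1$^\prime$), (T2) in hand, part (a) is done, part (b) is Theorem \ref{triangle equivalence} (giving both the equivalence $\cz/[\cp]\simeq\cu$ via $\cz\subset\ct\to\cu$ and the description of the triangulated structure) together with Theorem \ref{equivalence}, and part (c) is exactly Theorem \ref{triangle equivalence}(b), namely $\ct=\cx\perp\cz\perp\cy[1]$. The main obstacle I anticipate is the termination of the two approximation towers used for (T2): one must leverage (P2) carefully to see that after finitely many steps the cone lands in $\cx^\perp$ (resp.\ the cocone in ${}^\perp\cy$), rather than merely in an orthogonal to finitely many shifts of $\cp$; this is the one place where a genuinely quantitative argument, rather than formal manipulation of torsion pairs, is required. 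The dévissage for $\cs\subset\cx*\cy$ is routine but bookkeeping-heavy, and the verification that the various one-sided orthogonals coincide with $\cx,\cy,\cz$ is a sequence of short arguments each relying on the stability of $\cx,\cy$ under the appropriate shift.
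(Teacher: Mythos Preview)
Your proposal is correct and follows exactly the same strategy as the paper: verify (T0), (T1$^\prime$), (T2) for the given $\cx,\cy$, then invoke Theorems~\ref{equivalence} and \ref{triangle equivalence} to obtain (b) and (c). The only difference is that the paper outsources the verification of (T1$^\prime$) and (T2) to \cite[Propositions~2.8 and~3.2]{IyamaYang}, whereas you sketch those approximation/d\'evissage arguments directly; one small slip is that the covariant finiteness of $\cp$ in ${}^\perp\cp[>0]$ is what yields the torsion pair $\ct={}^\perp\cy\perp\cy$ (and dually contravariant finiteness gives $\ct=\cx\perp\cx^\perp$), not the other way around, but this does not affect the argument.
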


\begin{proof}
It is well-known that we have a co-t-structure $\cs=\cx\perp\cy$ (see for example \cite[Proposition 2.8]{IyamaYang}).
By \cite[Proposition 3.2]{IyamaYang}, we have torsion pairs $\ct=\cx\perp\cx^\perp={}^\perp\cy\perp\cy$.
Thus (T0), (T1$^\prime$) and (T2) in Theorem \ref{triangle equivalence} are satisfied, and we have the assertions.
\end{proof}

Now we apply Corollary~\ref{silting reduction} to prove Keller--Vossieck's equivalence \cite{KellerVossieck} in our context.
For an additive category $\ca$, we denote by $\ck(\ca)$ the homotopy category of complexes on $\ca$, and by $\ck^{\bo}(\ca)$ the full subcategory consisting of bounded complexes.

Let $\cf$ be a Frobenius category, $\cp$ the category of projective-injective objects in $\cf$ and $\underline{\cf}=\cf/[\cp]$ the stable category of $\cf$.
Then $\underline{\cf}$ has a structure of a triangulated category due to Happel \cite{Happel}.
We denote by $\ck^{-,\bo}(\cp)$ the full subcategory of $\ck(\cp)$ consisting of complexes $X=(X^i,d^i\colon X^i\to X^{i+1})$ satisfying the following conditions.
\begin{itemize}
\item[(a)] There exists $n_X\in\Z$ such that $X^i=0$ holds for each $i>n_X$.
\item[(b)] There exist $m_X\in\Z$ and a conflation $0\to Y^{i-1}\xrightarrow{a^{i-1}}X^{i}\xrightarrow{b^{i}}Y^{i}\to0$ in $\cf$ for each $i\le m_X$ such that $d^i=a^ib^i$ holds for each $i<m_X$.
\end{itemize}
It is elementary that the category $\cf$ is equivalent to the full subcategory of $\ck^{-,\bo}(\cp)$ consisting of complexes which are isomorphic in $\ck(\cp)$ to some $X$ satisfying $n_X\le0\le m_X$, and we identify them.
We denote by $\ck^{>0}(\cp)$ (respectively, $\ck^{<0}(\cp)$) the full subcategory of $\ck^{\bo}(\cp)$ consisting of complexes $X=(X^i,d^i\colon X^i\to X^{i+1})$ satisfying $X^i=0$ for $i\le0$ (respectively, $i\ge0$).

\begin{corollary}\label{frobenius case}
Let $\cf$ be a weakly idempotent complete Frobenius category with the category $\cp$ of projective-injective objects.
Then we have a decomposition
\[\ck^{-,\bo}(\cp)=\ck^{>0}(\cp)\perp\cf\perp\ck^{<0}(\cp).\]
Moreover the composition $\cf\subset\ck^{-,\bo}(\cp)\to\ck^{-,\bo}(\cp)/\ck^{\bo}(\cp)$ induces a triangle equivalence
\[\underline{\cf}\xrightarrow{\simeq}\ck^{-,\bo}(\cp)/\ck^{\bo}(\cp).\]
\end{corollary}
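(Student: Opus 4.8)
The plan is to apply the silting reduction of Corollary~\ref{silting reduction} to the triangulated category $\ct:=\ck^{-,\bo}(\cp)$, with $\cp$ regarded as the full subcategory of stalk complexes concentrated in degree $0$.

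I would first verify the hypotheses (P0), (P1) and (P2). For (P0): since $\cp$ is idempotent complete, $\cp=\add_\ct\cp$; and since every bounded complex over $\cp$ is built from stalk complexes by finitely many triangles (stupid truncations), $\cs:=\thick_\ct\cp=\ck^{\bo}(\cp)$, which is thick in $\ct$, so $\cu=\ct/\cs=\ck^{-,\bo}(\cp)/\ck^{\bo}(\cp)$ is exactly the Verdier quotient in the statement. Also $\cp$ is presilting because stalk complexes in distinct degrees admit no nonzero morphisms in $\ck(\cp)$. For (P2): for $X=(X^i,d^i)\in\ck^{-,\bo}(\cp)$ the vanishing $\Hom_\ct(\cp,X[\ell])=0$ for $\ell\gg0$ is immediate from $X^i=0$ for $i\gg0$, while $\Hom_\ct(X,\cp[\ell])=0$ for $\ell\gg0$ uses the conflations $0\to Y^{i-1}\to X^i\to Y^i\to0$ from condition (b) together with the injectivity of the objects of $\cp$: any chain map $X\to P[\ell]$ with $-\ell$ far below $m_X$ is contracted by extending along the admissible monomorphism $Y^{-\ell}\hookrightarrow X^{-\ell+1}$.

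The substantial hypothesis is (P1), and here the Frobenius structure of $\cf$ is essential. I would first establish concrete descriptions of the two relevant subcategories. An $X\in\ck^{-,\bo}(\cp)$ lies in $\cp[<0]^\perp$ exactly when it is homotopy equivalent to a complex concentrated in degrees $\le0$: the vanishing $\Hom_\ct(X^{n_X},X[n_X])=0$ forces the top differential $d_X^{n_X-1}$ to be a split epimorphism, permitting a Gaussian-elimination cancellation of the top term, which one iterates. Such an $X$ lies in ${}^\perp\cp[>0]$ exactly when, moreover, the conflation-tail of condition (b) reaches degree $0$, i.e.\ $X$ is homotopy equivalent to a ``projective resolution'' of an object of $\cf$; failure at some negative degree is detected by composing the relevant brutal quotient with an injective hull, which produces a nonzero map into a shift of an object of $\cp$. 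Granting these descriptions, for $X\in{}^\perp\cp[>0]$ an admissible monomorphism $X^0\hookrightarrow I$ into an injective object of $\cf$ induces a left $\cp$-approximation $X\to I$, so $\cp$ is covariantly finite in ${}^\perp\cp[>0]$; dually, projective covers give right $\cp$-approximations, so $\cp$ is contravariantly finite in $\cp[<0]^\perp$. I expect the main obstacle to be carrying out these homotopical reductions rigorously in the exact, non-abelian category $\cf$; the Frobenius hypothesis---enough projectives $=$ enough injectives, all of them projective-injective---is precisely what makes these cancellations and approximations available.

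Once (P0)--(P2) hold, Corollary~\ref{silting reduction} gives torsion pairs, the decomposition $\ct=\cx\perp\cz\perp\cy[1]$ and a triangle equivalence $\cz/[\cp]\simeq\cu$, where $\cx=\bigcup_{i>0}\cp[-i]*\cdots*\cp[-1]$, $\cy=\bigcup_{i>0}\cp*\cp[1]*\cdots*\cp[i]$ and $\cz=\cp[<0]^\perp\cap{}^\perp\cp[>0]$. Unwinding the shifts, $\cx$ is exactly the category of bounded complexes over $\cp$ concentrated in positive degrees, i.e.\ $\cx=\ck^{>0}(\cp)$, and $\cy$ the analogous category in degrees $\le0$, so $\cy[1]=\ck^{<0}(\cp)$; the description of $\cp[<0]^\perp\cap{}^\perp\cp[>0]$ above identifies $\cz$ with the copy of $\cf$ inside $\ck^{-,\bo}(\cp)$. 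This yields the decomposition $\ck^{-,\bo}(\cp)=\ck^{>0}(\cp)\perp\cf\perp\ck^{<0}(\cp)$, and the triangle equivalence becomes $\underline{\cf}=\cf/[\cp]=\cz/[\cp]\xrightarrow{\simeq}\cu=\ck^{-,\bo}(\cp)/\ck^{\bo}(\cp)$, which is the functor induced by $\cf\subset\ck^{-,\bo}(\cp)\to\ck^{-,\bo}(\cp)/\ck^{\bo}(\cp)$. Finally I would check that the triangulated structure on $\cz/[\cp]$ given by Theorem~\ref{triangle equivalence} is Happel's on $\underline{\cf}$: for $X\in\cf$ the chosen left $\cp$-approximation $\iota_X$ is the inflation $X\hookrightarrow I$ into an injective, whose mapping cone in $\ck^{-,\bo}(\cp)$ is homotopy equivalent to the conflation-cokernel $\Sigma X$, so $\langle1\rangle=\Sigma$; and, as in \cite{Happel}, the triangles of Theorem~\ref{triangle equivalence} among objects of $\cf$ are precisely the images of conflations in $\cf$, which are Happel's triangles. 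Since Happel's structure is determined by the suspension together with these triangles, the two structures coincide, giving the asserted triangle equivalence.
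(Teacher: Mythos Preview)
Your approach matches the paper's: verify (P0)--(P2) for $\ct=\ck^{-,\bo}(\cp)$ with $\cp$ the stalk complexes in degree $0$, apply Corollary~\ref{silting reduction}, identify $\cx,\cy[1],\cz$ with $\ck^{>0}(\cp),\ck^{<0}(\cp),\cf$, and observe that the triangulated structure of Theorem~\ref{triangle equivalence}(a) is Happel's. Two small corrections to your (P1) sketch: the left $\cp$-approximation of $X\in{}^\perp\cp[>\hspace{-3pt}0]$ (after arranging $m_X\ge0$) is the composite $X\to Y^0\hookrightarrow I$ with $Y^0=\Cok(d^{-1})$, since any chain map $X\to I$ to a stalk must kill $d^{-1}$ and hence factors through $Y^0$, not $X^0$; and for the right $\cp$-approximation of $X\in\cp[<\hspace{-3pt}0]^\perp$ (after arranging $n_X\le0$) no projective cover is needed---$X^0$ is already in $\cp$ and the canonical map $X^0\to X$ does the job, which matters because a general Frobenius category need not have covers.
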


The `Moreover' part is contained in \cite[Example 2.3]{KellerVossieck}.

\begin{proof}
Let $\ct=\ck^{-,\bo}(\cp)$, $\cs=\ck^{\bo}(\cp)$ and $\cu=\ct/\cs$. Then the condition (P0) is clearly satisfied.
We show that the conditions (P1) and (P2) are satisfied.

(P2) Fix $X\in\ct$. Then the condition (a) implies $\Hom_{\ct}(\cp[<\hspace{-3pt}-n_X],X)=0$, and the condition (b) implies $\Hom_{\ct}(X,\cp[>\hspace{-3pt}-m_X])=0$. Thus the condition (P2) is satisfied.

(P1) Fix $X\in\cp[<\hspace{-3pt}0]^\perp$ and put $n:=n_X$. 
If $n\le0$, then the natural morphism $X^0\to X$ gives a right $\cp$-approximation of $X$.
If $n>0$, then the natural morphism $X^n[-n]\to X$ must be zero in $\ct$. Thus there exists $f\in\Hom_{\cp}(X^n,X^{n-1})$ such that $1_{X^n}=d^{n-1}f$.
\[\xymatrix{
X^n[-n]:&&&X^n\ar[d]^{1_{X^n}}\ar[dl]_{f}\\
X:&\cdots\ar[r]&X^{n-1}\ar[r]_{d^{n-1}}&X^n\ar[r]&0\ar[r]&\cdots
}\]
Since $\cf$ is weakly idempotent complete, $X$ is isomorphic to the complex $X'$ obtained by replacing $d^{n-1}:X^{n-1}\to X^n$ in $X$ by $\Cok f\to 0$. Then $n_{X'}<n=n_X$ holds.
Repeating the same argument, we can assume that $n_X\le0$ without loss of generality. 
Thus $\cp$ is contravariantly finite in $\cp[<\hspace{-3pt}0]^\perp$.

Fix $X\in{}^\perp\cp[>\hspace{-3pt}0]$ and $m:=m_X$. Consider the conflation $0\to Y^{i-1}\xrightarrow{a^{i-1}}X^{i}\xrightarrow{b^{i}}Y^{i}\to0$ given in the condition (b). 
If $m\ge0$, then $b^0:X^0\to Y^0$ is a cokernel of $d^{-1}$. Thus the composition of the natural morphism $X\to Y^0$ and an inflation $Y^0\to P$ with $P\in\cp$ gives a left $\cp$-approximation of $X$.

Assume $m<0$, and take an inflation $f:Y^{m}\to P$ in $\cf$ with $P\in\cp$.
Since $b^m:X^m\to Y^m$ is a cokernel of $d^{m-1}$, there exists $a^m:Y^m\to X^{m+1}$ such that $d^m=a^mb^m$.
Since $X\in{}^\perp\cp[>\hspace{-3pt}0]$, the composition of natural morphisms $X\to Y^m[-m]\to P[-m]$ must be zero in $\ct$.
Thus there exists $g\in\Hom_{\cp}(X^{m+1},P)$ such that $fb^m=gd^m$.
\[\xymatrix@R1em{
X:&\cdots\ar[r]&X^{m-1}\ar[r]^{d^{m-1}}&X^{m}\ar[rr]^{d^{m}}\ar[dr]|{b^m}\ar[dd]_{fb^m}&&X^{m+1}\ar[r]^{d^{m+1}}\ar@/^.5cm/[ddll]^g&\cdots\\
&&&&Y^m\ar[ur]|{a^m}\ar[dl]|f\\
P[-m]:&&&P
}\]
Then $f=ga^m$ holds. Since $f$ is an inflation in $\cf$ and $\cf$ is weakly idempotent complete, $a^m$ is also an inflation in $\cf$. Thus there exists a conflation $0\to Y^{m}\xrightarrow{a^{m}}X^{m+1}\xrightarrow{b^{m+1}}Y^{m+1}\to0$ in $\cf$ satisfying $d^m=a^mb^m$, and we can replace $m_X$ by $m_X+1$.
Repeating the same argument, we can assume $m_X\ge0$ without loss of generality.
Thus $\cp$ is covariantly finite in ${}^\perp\cp[>\hspace{-3pt}0]$.
Thus the condition (P1) is satisfied.

We are ready to complete the proof of Corollary~\ref{frobenius case}.
Thanks to Corollary~\ref{silting reduction}, it suffices to show that $\cz=\cp[<\hspace{-3pt}0]^\perp\cap{}^\perp\cp[>\hspace{-3pt}0]$ coincides with $\cf$, and that the triangulated structure of $\cz/[\cp]$ coincides with that of $\underline{\cf}=\cf/[\cp]$.
The inclusion $\cz\supset\cf$ is clear.
Conversely, the above argument shows that any object in $\cz$ is isomorphic to some $X$ with $n_X\le0\le m_X$, and hence belongs to $\cf$. Thus $\cz=\cf$ holds.
On the other hand, the triangulated structure of $\cz/[\cp]$ described in Theorem~\ref{triangle equivalence}(a) is nothing but Happel's triangulated structure of $\underline{\cf}$ in this setting.
Thus the claim follows.
\end{proof}

Now we apply Corollary~\ref{frobenius case} to prove Buchweitz's equivalence \cite{Buchweitz87,Rickard}.
Recall that a Noetherian ring $R$ is called an \emph{Iwanaga--Gorenstein ring} if ${\rm inj.dim}_RR<\infty$ and ${\rm inj.dim}R_R<\infty$. Let
\[\CM R:=\{X\in\mod R\mid\Ext^i_R(X,R)=0\ \forall i>0\}\]
be the category of \emph{Cohen--Macaulay $R$-modules}.
For an additive category $\ca$, we denote by $\ck^{\bo}(\ca)=\ck^{>0}(\ca)\perp\ck^{\le0}(\ca)$ the standard co-t-structure (see \cite[Section 1.1]{Bondarko10} and \cite[Section 3.1]{Pauksztello08}).
For an abelian category $\ca$, we denote by $\cd^{\bo}(\ca)$ the bounded derived category of $\ca$.

\begin{corollary}\label{buchweitz}
Let $R$ be an Iwanaga--Gorenstein ring. Then we have
\[\cd^{\bo}(\mod R)=\ck^{>0}(\proj R)\perp\CM R\perp\ck^{<0}(\proj R).\]
Moreover the composition $\CM R\subset\cd^{\bo}(\mod R)\to\cd^{\rm b}(\mod R)/\ck^{\rm b}(\proj R)$ induces a triangle equivalence
\[\underline{\CM} R\simeq\cd^{\rm b}(\mod R)/\ck^{\rm b}(\proj R).\]
\end{corollary}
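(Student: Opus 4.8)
The plan is to obtain this as the special case $\cf=\CM R$ of Corollary~\ref{frobenius case}. The first step is the classical fact, due to Buchweitz, that for an Iwanaga--Gorenstein ring $R$ the full subcategory $\CM R\subset\mod R$, with the exact structure induced from $\mod R$, is a Frobenius category whose subcategory of projective-injective objects is exactly $\proj R=\add R$. The key inputs are that $R$ itself belongs to $\CM R$, that $\Ext^i_R(M,R)=0$ for all $i>0$ and $M\in\CM R$ makes each module in $\proj R$ injective in $\CM R$, and that a surjection $R^{\oplus n}\twoheadrightarrow M$ with $M\in\CM R$ has Cohen--Macaulay kernel (using $\operatorname{inj.dim}R<\infty$), giving enough projectives and, dually, enough injectives. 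Since $\proj R=\add R$ is idempotent complete, Corollary~\ref{frobenius case} applies and yields
\[\ck^{-,\bo}(\proj R)=\ck^{>0}(\proj R)\perp\CM R\perp\ck^{<0}(\proj R)\]
together with a triangle equivalence $\underline{\CM}R\xrightarrow{\simeq}\ck^{-,\bo}(\proj R)/\ck^{\bo}(\proj R)$, the target carrying Happel's triangulated structure of $\underline{\CM}R$.

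The remaining step is to identify $\ck^{-,\bo}(\proj R)$ with $\cd^{\bo}(\mod R)$. I would use the tautological functor $\ck^{-,\bo}(\proj R)\to\cd^{\bo}(\mod R)$: it is fully faithful because morphisms in the derived category out of a bounded-above complex of projectives are homotopy classes of chain maps. For essential surjectivity the Gorenstein hypothesis is used: if $\operatorname{inj.dim}R=d<\infty$, then $\Ext^i_R(\Omega^d_R N,R)\cong\Ext^{i+d}_R(N,R)=0$ for all $i>0$ and $N\in\mod R$, so $\Omega^d_R N\in\CM R$; hence a projective resolution $P^\bullet\to M^\bullet$ of a bounded complex $M^\bullet$ is, sufficiently far to the left, an exact complex of projectives whose cycles all lie in $\CM R$, which is exactly condition (b) in the definition of $\ck^{-,\bo}(\proj R)$, so $P^\bullet$ lies there. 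Conversely, condition (b) forces any object of $\ck^{-,\bo}(\proj R)$ to have cohomology concentrated in a bounded range $[m_X,n_X]$, and this cohomology is finitely generated by Noetherianness, so the object lies in $\cd^{\bo}(\mod R)$. Hence the functor is an equivalence.

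Finally I would check that this equivalence carries the three pieces of Corollary~\ref{frobenius case} to the three pieces in the statement: $\ck^{\bo}(\proj R)$ goes to $\ck^{\bo}(\proj R)\subset\cd^{\bo}(\mod R)$; the subcategories $\ck^{>0}(\proj R)$ and $\ck^{<0}(\proj R)$ of genuine bounded complexes of projectives go to themselves; and $\CM R$, realized inside $\ck^{-,\bo}(\proj R)$ as the complexes isomorphic to one with $n_X\le0\le m_X$, goes to $\CM R\subset\cd^{\bo}(\mod R)$ via $M\mapsto(\text{a projective resolution of }M)\simeq M$. Transporting the decomposition and the triangle equivalence of Corollary~\ref{frobenius case} along this identification gives both assertions; no separate comparison of triangulated structures is needed, since Corollary~\ref{frobenius case} already produces Happel's structure on $\underline{\CM}R$. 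I expect the only delicate point to be the verification that the tautological functor $\ck^{-,\bo}(\proj R)\to\cd^{\bo}(\mod R)$ is an equivalence respecting the three summands; everything else is classical or immediate from Corollary~\ref{frobenius case}.
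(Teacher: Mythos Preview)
Your proof is correct and follows essentially the same route as the paper: apply Corollary~\ref{frobenius case} to $\cf=\CM R$ and then identify $\ck^{-,\bo}(\proj R)$ with $\cd^{\bo}(\mod R)$. The paper compresses your detailed argument for the latter equivalence into a one-line citation of \cite[Lemma~4.1.2(iv)]{Buchweitz87}, which is precisely the statement that any bounded-above complex of finitely generated projectives with bounded cohomology lies in $\ck^{-,\bo}(\proj R)$; your syzygy argument via $\Omega^d N\in\CM R$ is the standard way to prove that lemma.
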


The `Moreover' part is \cite[Theorem 4.4.1(2)]{Buchweitz87}, and \cite[Theorem 2.1]{Rickard} for self-injective algebras.

\begin{proof}
Any complex of finitely generated projective $R$-modules with bounded cohomologies is in $\ck^{-,\bo}(\proj R)$. It follows that $\cd^{\bo}(\mod R)$ is triangle equivalent to $\ck^{-,\bo}(\proj R)$. The desired results are obtained by applying Corollary~\ref{frobenius case} to $\cf=\CM R$.
\end{proof}

\subsection{Orlov's equivalences}\label{apply Orlov}

Throughout this subsection, we assume the following.
\begin{itemize}
\item[(R0)] $R$ is a $\Z$-graded Iwanaga--Gorenstein ring such that $R=R_{\ge0}$.
\end{itemize}

In \cite{Orlov}, Orlov gave a remarkable connection between two Verdier quotients of $\cd^{\bo}(\mod^{\Z}R)$, where $\mod^{\Z}R$ is the category of $\Z$-graded finitely generated $R$-modules. One is $\cd^{\bo}(\mod^{\Z}R)/\ck^{\bo}(\proj^{\Z}R)$ for the category $\proj^{\Z}R$ of $\Z$-graded finitely generated projective $R$-modules.
This is important in Cohen--Macaulay representation theory as we saw in the previous subsection.
The other is $\cd^{\bo}(\mod^{\Z}R)/\cd^{\bo}(\mod_0^{\Z}R)$ for the category $\mod_0^{\Z}R$ of $\Z$-graded $R$-modules of finite length. This is important in commutative and non-commutative algebraic geometry.

The aim of this subsection is to deduce Orlov's theorem from Corollary~\ref{triangle equivalence2} in a slightly more general setting than Orlov's original setting \cite{Orlov}.

For an integer $\ell$, we denote by $\mod^{\ge\ell}R$ (respectively, $\mod^{\le\ell}R$) the full subcategory of $\mod^{\Z}R$ consisting of all $X$ satisfying $X_i=0$ for any $i<\ell$ (respectively, $i>\ell$). 
We denote by $\proj^{\ge\ell}R$ (respectively, $\proj^{\le\ell}R$) the full subcategory of $\proj^{\Z}R$ consisting of all $P$ which are generated by homogeneous elements of degrees at least $\ell$ (respectively, at most $\ell$). Let $\mod^{>\ell}R:=\mod^{\ge\ell+1}R$, $\mod^{<\ell}R:=\mod^{\le\ell-1}R$, $\proj^{>\ell}R:=\proj^{\ge\ell+1}R$ and $\proj^{<\ell}R:=\proj^{\le\ell-1}R$.
Since $R$ is Iwanaga--Gorenstein, we have a duality \cite[Corollary 2.11]{Miyachi2}
\[(-)^*:=\RHom_R(-,R):\cd^{\bo}(\mod^{\Z}R)\leftrightarrow\cd^{\bo}(\mod^{\Z}R^{\op}).\]
Since the extension groups in $\mod^{\ge\ell}R$ (respectively, $\mod^{\le\ell}R$) coincide with those in $\mod^{\Z}R$, we can regard $\cd^{\bo}(\mod^{\ge\ell}R)$ (respectively, $\cd^{\bo}(\mod^{\le\ell}R)$) as a thick subcategory of $\cd^{\bo}(\mod^{\Z}R)$.

Under the following condition, we apply Corollary~\ref{triangle equivalence2} to deduce the following result.
\begin{itemize}
\item[(R1)] $R_0$ has finite global dimension.
\end{itemize}

\begin{corollary}\label{orlov1}
Let $\ell\in\Z$. Under the assumptions {\rm(R0)} and {\rm(R1)}, we have
\[\cd^{\bo}(\mod^{\Z}R)=\ck^{\bo}(\proj^{<\ell}R)\perp\left(\cd^{\bo}(\mod^{\ge\ell}R)\cap\cd^{\bo}(\mod^{>-\ell}R^{\op})^*\right)\perp\ck^{\bo}(\proj^{\ge\ell}R).\]
In particular, the composition
\[\cd^{\bo}(\mod^{\ge\ell}R)\cap\cd^{\bo}(\mod^{>-\ell}R^{\op})^*\subset\cd^{\bo}(\mod^{\Z}R)\to\cd^{\bo}(\mod^{\Z}R)/\ck^{\bo}(\proj^{\Z}R)\]
is a triangle equivalence.
\end{corollary}

This is given implicitly in \cite[Lemma 2.4]{Orlov}, and a similar result is given in \cite[Theorem 3.17]{HIMO}.

To prove it, we need the following elementary observation.

\begin{lemma}[{c.f.\ \cite[Lemma 2.3]{Orlov}}]\label{SOD of K}
Let $\ell\in\Z$. Under the assumption {\rm(R0)}, we have
$\ck^{*}(\proj^{\Z}R)=\ck^{*}(\proj^{<\ell}R)\perp\ck^{*}(\proj^{\ge\ell}R)$ for $*=$nothing, $\pm$, $\bo$.
If {\rm(R1)} is also satisfied, then $\ck^{-,\bo}(\proj^{\Z}R)=\ck^{\bo}(\proj^{<\ell}R)\perp\ck^{-,\bo}(\proj^{\ge\ell}R)$.
\end{lemma}

\begin{proof}
Clearly $\Hom_{\ck(\proj^{\Z}R)}(\ck(\proj^{<\ell}R),\ck(\proj^{\ge\ell}R))=0$ holds.
For any $P\in\proj^{\Z}R$, we denote by $P^{<\ell}$ the sub-$R$-module of $P$
generated by the subspace $\bigoplus_{i<\ell}P_i$, and $P^{\ge\ell}:=P/P^{\le\ell}$.
These give functors $(-)^{<\ell}:\proj^{\Z}R\to\proj^{<\ell}R$,
$(-)^{\ge\ell}:\proj^{\Z}R\to\proj^{\ge\ell}R$ and a sequence
\[0\to(-)^{<\ell}\to {\rm id}\to(-)^{\ge\ell}\to0\]
of natural transformations which is objectwise split exact.
Therefore we have induced triangle functors
$(-)^{<\ell}:\ck(\proj^{\Z}R)\to\ck(\proj^{<\ell}R)$ and
$(-)^{\ge\ell}:\ck(\proj^{\Z}R)\to\ck(\proj^{\ge\ell}R)$, which preserve the boundedness condition $*$, and a functorial triangle
$Q^{<\ell}\to Q\to Q^{\ge\ell}\to Q^{<\ell}[1]$ for any $Q\in\ck(\proj^{\Z}R)$.
Thus we have the first equality.

We show the second equality. 
Clearly $\Hom_{\ck(\proj^{\Z}R)}(\ck^{\bo}(\proj^{<\ell}R),\ck^{-,\bo}(\proj^{\ge\ell}R))=0$ holds.
We need to show that any $Q\in\ck^{-,\bo}(\proj^{\Z}R)$ belongs to $\ck^{\bo}(\proj^{<\ell}R)\perp\ck^{-,\bo}(\proj^{\ge\ell}R)$.
It suffices to show that $Q^{<\ell}$ belongs to $\ck^{\bo}(\proj^{<\ell}R)$.
Let $Q^{(i)}\in\ck^{-,\bo}(\proj^{\Z}R)$ be a projective resolution of $H^i(Q)\in\mod^\Z R$. 
Take $m\gg0$ such that $H^i(Q)=0$ for all $i\in\Z$ with $|i|>m$.
Then $Q$ belongs to $Q^{(-m)}[m]*Q^{(1-m)}[m-1]*\cdots*Q^{(m)}[-m]$.
By the condition (R1), $(Q^{(i)})^{<\ell}$ belongs to $\ck^{\bo}(\proj^{<\ell}R)$. 
Thus $Q^{<\ell}$ belongs to $(Q^{(-m)})^{<\ell}[m]*\cdots*(Q^{(m)})^{<\ell}[-m]\subset\ck^{\bo}(\proj^{<\ell}R)$, as desired.
\end{proof}

\begin{proof}[Proof of Corollary \ref{orlov1}]
Without loss of generality, assume $\ell=0$.
Let $\ct:=\cd^{\bo}(\mod^{\Z}R)$, $\cs:=\ck^{\bo}(\proj^{\Z}R)$, $\cx:=\ck^{\bo}(\proj^{<0}R)$ and $\cy:=\ck^{\bo}(\proj^{\ge0}R)$. By Lemma \ref{SOD of K}, we have stable t-structures
\begin{eqnarray*}
\ck^{\bo}(\proj^{\Z}R)=\cx\perp\cy\ \mbox{ and }\ \cd^{\bo}(\mod^{\Z}R)\simeq\ck^{-,\bo}(\proj^{\Z}R)=\cx\perp\cx^\perp
\end{eqnarray*}
with $\cx^\perp=\ck^{-,\bo}(\proj^{\ge0}R)\simeq\cd^{\bo}(\mod^{\ge0}R)$.
Replacing $R$ by $R^{\op}$ and shifting the degree in the second stable t-structure, we have a stable t-structure
\[\cd^{\bo}(\mod^{\Z}R^{\op})=\ck^{\bo}(\proj^{\le0}R^{\op})\perp\cd^{\bo}(\mod^{>0}R^{\op}).\]
Applying $(-)^*$ and using $\ck^{\bo}(\proj^{\le0}R^{\op})^*=\cy$, we have a stable t-structure
\[\cd^{\bo}(\mod^{\Z}R)=\cd^{\bo}(\mod^{>0}R^{\op})^*\perp\ck^{\bo}(\proj^{\le0}R^{\op})^*={}^\perp\cy\perp\cy.\]
Thus (T0), (T1$^{\prime\prime}$) and (T2) in Corollary~\ref{triangle equivalence2} are satisfied, and we have the assertion.
\end{proof}

For an integer $\ell$, let $\mod_0^{\ge\ell}R:=\mod_0^{\Z}R\cap\mod^{\ge\ell}R$ and $\mod_0^{\le\ell}R:=\mod_0^{\Z}R\cap\mod^{\le\ell}R$. It is clear that $\mod_0^{\le\ell}R=\mod^{\le\ell}R$ holds.
Let $\mod^\ell R:=(\mod^{\ge\ell}R)\cap(\mod^{\le\ell}R)$.
As before, we can regard $\cd^{\bo}(\mod_0^{\Z}R)$ (respectively, $\cd^{\bo}(\mod_0^{\ge\ell}R)$, $\cd^{\bo}(\mod_0^{\le\ell}R)$, $\cd^{\bo}(\mod_0^{\ell}R)$) as a thick subcategory of $\cd^{\bo}(\mod^{\Z}R)$.

The following conditions are crucial for our next result.

\begin{itemize}
\item[(R2)] $R_0$ is an Artinian ring. 
\item[(R3)] There exists $a\in\Z$ such that $(-)^*$ restricts to a duality $(-)^*:\cd^{\bo}(\mod^0R)\leftrightarrow\cd^{\bo}(\mod^aR^{\op})$.
\end{itemize}

For a Noetherian ring $R$, the condition (R2) implies that $R_i$ has finite length as an $R$-module and as an $R^{\op}$-module for any $i\in\Z$ (here the $R$- and $R^{\op}$-module structure on $R_i$ is obtained via the homomorphism $R\to R_0$).
The condition (R3) is satisfied if $R$ has an $a$-invariant $a$,
but (R3) is much more general (see Remark~\ref{a-invariant} for details).

Under these assumptions, we apply Corollary~\ref{triangle equivalence2} to deduce the following result.

\begin{corollary}\label{orlov2}
Let $\ell\in\Z$. Under the assumptions {\rm(R0)}, {\rm(R2)} and {\rm(R3)}, we have
\[\cd^{\bo}(\mod^{\Z}R)=\cd^{\bo}(\mod_0^{\ge\ell}R)\perp\left(\cd^{\bo}(\mod^{\ge\ell}R)\cap\cd^{\bo}(\mod^{>a-\ell}R^{\op})^*\right)\perp\cd^{\bo}(\mod_0^{<\ell}R).\]
In particular, the composition
\[\cd^{\bo}(\mod^{\ge\ell}R)\cap\cd^{\bo}(\mod^{>a-\ell}R^{\op})^*\subset\cd^{\bo}(\mod^{\Z}R)\to\cd^{\bo}(\mod^{\Z}R)/\cd^{\bo}(\mod_0^{\Z}R)\]
is a triangle equivalence.
\end{corollary}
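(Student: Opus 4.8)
The plan is to mimic the proof of Corollary~\ref{orlov1}, replacing the co-t-structure coming from $\ck^{\bo}(\proj^{\Z}R)$ with the one coming from $\cd^{\bo}(\mod_0^{\Z}R)$, and checking that the axioms (T0), (T1$^{\prime\prime}$), (T2) of Corollary~\ref{triangle equivalence2} are met. Thus I set $\ct:=\cd^{\bo}(\mod^{\Z}R)$, $\cs:=\cd^{\bo}(\mod_0^{\Z}R)$, and take as the candidate thick subcategories
\[\cx:=\cd^{\bo}(\mod_0^{\ge0}R)\quad\text{and}\quad\cy:=\cd^{\bo}(\mod_0^{<0}R).\]
First I would record that (R2) guarantees $R$ has a central-ish ``degreewise finite length'' property, so that truncation functors behave well: for a graded module $M$, the submodule $M_{\ge n}$ and the quotient $M_{<n}$ are again finitely generated, and $\mod_0^{\Z}R$ decomposes, up to filtration, into its $\mod_0^{\ge n}$ and $\mod_0^{<n}$ parts. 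This gives the first stable t-structure $\cs=\cx\perp\cy$: every length-finite graded module is an extension of a bounded-below piece by a bounded-above piece, the Hom-vanishing $\Hom(\cx,\cy)=0$ follows from the degree supports being disjoint and $R=R_{\ge0}$ (a morphism from something supported in degrees $\ge0$ to something supported in degrees $<0$ must vanish), and stability under shift is clear since shifting preserves length and only translates the degree cut-off — wait, that is not literally true, but passing through the whole family of shifts of the generating subcategories as in Corollary~\ref{silting reduction} (the unions over all degree cut-offs) repairs this; more cleanly, $\mod_0^{\Z}R=\mod_0^{\ge0}R\perp\mod_0^{<0}R$ with both sides stable under $\langle 1\rangle$ because the cut-off at $0$ is not preserved, so instead I will verify directly that $\cd^{\bo}(\mod_0^{\ge 0}R)[1]=\cd^{\bo}(\mod_0^{\ge 0}R)$ is false and therefore use the correct stable pair; the honest statement is that $\mod_0^{\ge0}R$ is closed under $\langle{-1}\rangle$ but not $\langle1\rangle$, so I should instead take the perpendicular pair built from \emph{all} degree shifts, exactly as one does for silting reduction. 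I will phrase this as: $\cd^{\bo}(\mod_0^{\Z}R)=\cx\perp\cy$ is a stable t-structure where $\cx,\cy$ are the thick closures of the respective length-finite graded modules concentrated in nonnegative (resp. negative) degrees, using that $\Hom^{\Z}_R(R\langle{-i}\rangle, S\langle{-j}\rangle)$ vanishes when $i\le 0<j$ by $R=R_{\ge0}$.

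Next I would establish the two torsion pairs on $\ct$ required by (T2), namely $\ct=\cx\perp\cx^\perp={}^\perp\cy\perp\cy$. For the first, $\cx^\perp$: a complex $X\in\cd^{\bo}(\mod^{\Z}R)$ lies in $(\cd^{\bo}(\mod_0^{\ge0}R))^\perp$ iff $\Hom(S\langle{-i}\rangle, X[n])=0$ for all simple $S$ and all $i\ge0$, $n\in\Z$; a standard truncation argument (the good truncation $\tau_{\ge0}$ on the level of \emph{internal degree}, i.e. $X\mapsto X_{\ge0}$ applied termwise to a projective resolution, or rather the derived functor of the ``take degrees $\ge0$'' functor) shows that every $X$ sits in a triangle $X'\to X\to X''$ with $X'\in\cx$ and $X''\in\cx^\perp$, and one identifies $\cx^\perp$ explicitly. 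This is precisely the sort of statement proved in \cite[Lemma 2.3]{Orlov} (and its non-commutative extension), and I would cite it; (R2) is what makes the truncation stay inside $\mod^{\Z}R$. For the second torsion pair, I would pass to $R^{\op}$: by the same argument there is a stable t-structure $\cd^{\bo}(\mod^{\Z}R^{\op})=\cd^{\bo}(\mod_0^{\le a}R^{\op})\perp\cd^{\bo}(\mod^{>a}R^{\op})$, then apply the duality $(-)^*$ of (R3)/\cite[Corollary 2.11]{Miyachi2}. Here is where (R3) does its job: $(-)^*$ sends $\cd^{\bo}(\mod^0R)$ to $\cd^{\bo}(\mod^aR^{\op})$, hence (taking thick closures under shifts of the internal grading and using that $\mod_0^{\Z}$ is generated by its degree-$0$, resp. degree-$a$, part under internal shift) it sends $\cd^{\bo}(\mod_0^{\le a}R^{\op})$ back to $\cd^{\bo}(\mod_0^{<0}R)=\cy$ — I need to check the bookkeeping of which half goes where, matching Orlov's normalization so that the cut at $a$ on the $R^{\op}$ side corresponds to the cut at $0$ on the $R$ side. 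Applying $(-)^*$ to the $R^{\op}$ stable t-structure then yields $\ct=\cd^{\bo}(\mod^{>a}R^{\op})^*\perp\cd^{\bo}(\mod_0^{\le a}R^{\op})^*={}^\perp\cy\perp\cy$, as desired.

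With (T0), (T1$^{\prime\prime}$), (T2) verified, Corollary~\ref{triangle equivalence2} applies directly and gives the weak semi-orthogonal decomposition $\ct=\cx\perp\cz\perp\cy$ with $\cz=\cx^\perp\cap{}^\perp\cy[1]=\cx^\perp\cap{}^\perp\cy$ (the shift disappears since $\cy$ is thick), i.e.
\[\cz=\cd^{\bo}(\mod^{\ge0}R)\cap\cd^{\bo}(\mod^{>a}R^{\op})^*,\]
and the composition $\cz\subset\ct\to\ct/\cs=\cd^{\bo}(\mod^{\Z}R)/\cd^{\bo}(\mod_0^{\Z}R)$ is a triangle equivalence. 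The main obstacle I anticipate is not the abstract machinery — that is essentially automatic once the axioms hold — but the careful verification that the two one-sided torsion pairs on $\ct$ actually exist, i.e. that the internal-degree truncation of a bounded complex of finitely generated graded modules stays within $\cd^{\bo}(\mod^{\Z}R)$ and produces the claimed perpendicular categories; this is exactly where the finiteness hypothesis (R2) is essential (without it the truncations need not be finitely generated), and where one must invoke (R3) to transport the second torsion pair across the duality with the correct degree shift. The rest is bookkeeping: matching the degree conventions $\ge0$ versus $>0$ versus $>a$ so that the perpendicular categories line up with the statement, and noting that $\mod_0^{<0}R=\mod^{<0}_0R$ and $\mod_0^{\ge0}R$ are the correct ``two halves'' of $\mod_0^{\Z}R$ in the sense that their thick closures give a stable t-structure on $\cs$.
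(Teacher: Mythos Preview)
Your overall strategy is exactly the paper's: set $\ct=\cd^{\bo}(\mod^{\Z}R)$, $\cs=\cd^{\bo}(\mod_0^{\Z}R)$, $\cx=\cd^{\bo}(\mod_0^{\ge0}R)$, $\cy=\cd^{\bo}(\mod_0^{<0}R)$, verify (T0), (T1$^{\prime\prime}$), (T2), and invoke Corollary~\ref{triangle equivalence2}. But two points in the execution go wrong.

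First, your worry about $\cx[1]=\cx$ is self-inflicted: the shift $[1]$ is the \emph{triangulated} shift on complexes, not the grading twist $(1)$, so $\cd^{\bo}(\mod_0^{\ge0}R)[1]=\cd^{\bo}(\mod_0^{\ge0}R)$ is trivially true and $\cs=\cx\perp\cy$ is a stable t-structure with no further work. The whole digression about $\langle 1\rangle$ and unions over degree cut-offs is unnecessary.

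Second, and this is the real gap, you have the two torsion pairs on $\ct$ swapped. The internal-degree truncation $0\to X_{\ge0}\to X\to X_{<0}\to0$ lands $X_{\ge0}$ in $\cd^{\bo}(\mod^{\ge0}R)$, \emph{not} in $\cx=\cd^{\bo}(\mod_0^{\ge0}R)$; the piece $X_{<0}$ is what has finite length (by (R2) and $R=R_{\ge0}$). So the truncation directly produces the pair $\ct={}^\perp\cy\perp\cy$ with ${}^\perp\cy=\cd^{\bo}(\mod^{\ge0}R)$, not $\ct=\cx\perp\cx^\perp$. Correspondingly, the duality step should run the other way: on the $R^{\op}$ side the truncation gives $\cd^{\bo}(\mod^{\Z}R^{\op})=\cd^{\bo}(\mod^{>a}R^{\op})\perp\cd^{\bo}(\mod_0^{\le a}R^{\op})$ (your order is reversed, and the version you wrote is not obviously a torsion pair at all). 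Applying $(-)^*$ then yields $\ct=\cd^{\bo}(\mod_0^{\le a}R^{\op})^*\perp\cd^{\bo}(\mod^{>a}R^{\op})^*$, and (R3) identifies $\cd^{\bo}(\mod_0^{\le a}R^{\op})^*$ with $\cx$ (not with $\cy$, as you wrote), giving $\ct=\cx\perp\cx^\perp$ with $\cx^\perp=\cd^{\bo}(\mod^{>a}R^{\op})^*$. With this correction, the identification of $\cz=\cx^\perp\cap{}^\perp\cy$ comes out right and the rest of your argument goes through exactly as in the paper.
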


This is given implicitly in \cite[Lemma 2.4]{Orlov}, and a similar result is given in \cite[Theorem 5.16]{HIMO}.

To prove it, we need the following elementary observation.

\begin{lemma}[{c.f.\ \cite[Lemma 2.3]{Orlov}}]\label{SOD of S}
Let $\ell\in\Z$. Under the assumtion {\rm(R0)}, we have
$\cd^{\bo}(\mod_0^{\Z}R)=\cd^{\bo}(\mod_0^{\ge\ell}R)\perp\cd^{\bo}(\mod_0^{<\ell}R)$
and $\cd^{\bo}(\mod^{\Z}R)=\cd^{\bo}(\mod^{\ge\ell}R)\perp\cd^{\bo}(\mod_0^{<\ell}R)$.
\end{lemma}

\begin{proof}
Clearly $\Hom_{\cd^{\bo}(\mod^{\Z}R)}(\cd^{\bo}(\mod^{\ge\ell}R),\cd^{\bo}(\mod^{<\ell}R))=0$ holds.
For any $X\in\mod^{\Z}R$, let $X^{\ge\ell}=\bigoplus_{i\ge\ell}X_i\subset X$, and $X^{<\ell}:=X/X^{\ge\ell}$.
These give exact functors $(-)^{\ge\ell}:\mod^{\Z}R\to\mod^{\ge\ell}R$,
$(-)^{<\ell}:\mod^{\Z}R\to\mod^{<\ell}R=\mod_0^{<\ell}R$ and a sequence
$0\to(-)^{\ge\ell}\to {\rm id}\to(-)^{<\ell}\to0$
of natural transformations. Therefore we have induced triangle functors
$(-)^{\ge\ell}:\cd^{\bo}(\mod^{\Z}R)\to\cd^{\bo}(\mod^{\ge\ell}R)$,
$(-)^{<\ell}:\cd^{\bo}(\mod^{\Z}R)\to\cd^{\bo}(\mod_0^{<\ell}R)$ and a functorial triangle
$Y^{\ge\ell}\to Y\to Y^{<\ell}\to Y^{\ge\ell}[1]$ for any $Y\in\cd^{\bo}(\mod^{\Z}R)$.
Thus we have the second equality. The first one follows from the second one.
\end{proof}

\begin{proof}[Proof of Corollary \ref{orlov2}]
Without loss of generality, assume $\ell=0$.
Let $\ct:=\cd^{\bo}(\mod^{\Z}R)$, $\cs:=\cd^{\bo}(\mod_0^{\Z}R)$, $\cx:=\cd^{\bo}(\mod_0^{\ge0}R)$ and $\cy:=\cd^{\bo}(\mod_0^{<0}R)$.
By Lemma \ref{SOD of S}, we have stable t-structures
\begin{eqnarray*}
\cd^{\bo}(\mod_0^{\Z}R)=\cx\perp\cy\ \mbox{ and }\ \cd^{\bo}(\mod^{\Z}R)={}^\perp\cy\perp\cy
\end{eqnarray*}
with ${}^\perp\cy=\cd^{\bo}(\mod^{\ge0}R)$. Replacing $R$ by $R^{\op}$ and shifting the degree in the second stable t-structure, we have a stable t-structure
\begin{equation}\label{SOD}
\cd^{\bo}(\mod^{\Z}R^{\op})=\cd^{\bo}(\mod^{>a}R^{\op})\perp\cd^{\bo}(\mod_0^{\le a}R^{\op}).
\end{equation}
By (R3), the duality $(-)^*$ induces a duality $(-)^*:\cx\simeq\cd^{\bo}(\mod_0^{\le a}R^{\op})$.
Applying $(-)^*$ to \eqref{SOD}, we have a stable t-structure
\[\cd^{\bo}(\mod^{\Z}R)=\cd^{\bo}(\mod^{\le a}R^{\op})^*\perp\cd^{\bo}(\mod^{>a}R^{\op})^*=\cx\perp\cx^\perp.\]
Thus (T0), (T1$^{\prime\prime}$) and (T2) in Corollary~\ref{triangle equivalence2} are satisfied, and we have the assertion.
\end{proof}

Combining Corollaries \ref{orlov1} and \ref{orlov2}, we have the following Orlov's theorem immediately.

\begin{corollary}[{\cite[Theorem 2.5]{Orlov}}]\label{orlov3}
Under the assumptions {\rm(R0)}, {\rm(R1)}, {\rm(R2)} and {\rm(R3)}, we have the following.
\begin{itemize}
\item[(a)] If $a<0$, then there exists a fully faithful triangle functor
\[\cd^{\bo}(\mod^{\Z}R)/\ck^{\bo}(\proj^{\Z}R)\to\cd^{\bo}(\mod^{\Z}R)/\cd^{\bo}(\mod_0^{\Z}R).\]
\item[(b)] If $a=0$, then there exists a triangle equivalence
\[\cd^{\bo}(\mod^{\Z}R)/\ck^{\bo}(\proj^{\Z}R)\simeq\cd^{\bo}(\mod^{\Z}R)/\cd^{\bo}(\mod_0^{\Z}R).\]
\item[(c)] If $a>0$, then there exists a fully faithful triangle functor
\[\cd^{\bo}(\mod^{\Z}R)/\cd^{\bo}(\mod_0^{\Z}R)\to\cd^{\bo}(\mod^{\Z}R)/\ck^{\bo}(\proj^{\Z}R).\]
\end{itemize}
\end{corollary}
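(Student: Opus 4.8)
The plan is to play the two realizations from Corollaries~\ref{orlov1} and \ref{orlov2} against each other inside the common ambient category $\ct:=\cd^{\bo}(\mod^{\Z}R)$. Since {\rm(R0)}, {\rm(R1)}, {\rm(R2)} and {\rm(R3)} all hold, both corollaries apply. Write
\[\cz_1:=\cd^{\bo}(\mod^{\ge0}R)\cap\cd^{\bo}(\mod^{>0}R^{\op})^*\ \text{ and }\ \cz_2:=\cd^{\bo}(\mod^{\ge0}R)\cap\cd^{\bo}(\mod^{>a}R^{\op})^*,\]
and set $\cu_1:=\cd^{\bo}(\mod^{\Z}R)/\ck^{\bo}(\proj^{\Z}R)$, $\cu_2:=\cd^{\bo}(\mod^{\Z}R)/\cd^{\bo}(\mod_0^{\Z}R)$. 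Corollary~\ref{orlov1} provides a weak semi-orthogonal decomposition $\ct=\ck^{\bo}(\proj^{<0}R)\perp\cz_1\perp\ck^{\bo}(\proj^{\ge0}R)$ and a triangle equivalence $\cz_1\xrightarrow{\ \simeq\ }\cu_1$, while Corollary~\ref{orlov2} provides $\ct=\cd^{\bo}(\mod_0^{\ge0}R)\perp\cz_2\perp\cd^{\bo}(\mod_0^{<0}R)$ and a triangle equivalence $\cz_2\xrightarrow{\ \simeq\ }\cu_2$. Being a term of a weak semi-orthogonal decomposition, each $\cz_i$ is a \emph{thick} subcategory of $\ct$, and the triangulated structure for which $\cz_i\simeq\cu_i$ is the one inherited from $\ct$, since the equivalence is realized by the composite $\cz_i\subset\ct\to\cu_i$ of triangle functors.

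Next I would compare $\cz_1$ and $\cz_2$ directly as full subcategories of $\ct$: they differ only in the factor coming from $R^{\op}$. By definition $\mod^{>0}R^{\op}=\mod^{\ge1}R^{\op}$ and $\mod^{>a}R^{\op}=\mod^{\ge a+1}R^{\op}$, and $\ell\mapsto\mod^{\ge\ell}R^{\op}$ is inclusion-reversing in $\ell$. Hence, if $a<0$ then $\mod^{>0}R^{\op}\subseteq\mod^{>a}R^{\op}$, so $\cd^{\bo}(\mod^{>0}R^{\op})\subseteq\cd^{\bo}(\mod^{>a}R^{\op})$; applying the duality $(-)^*$ (which sends a full subcategory to its essential image, hence preserves inclusions of full subcategories) and intersecting with $\cd^{\bo}(\mod^{\ge0}R)$ yields $\cz_1\subseteq\cz_2$. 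Symmetrically $\cz_2\subseteq\cz_1$ if $a>0$, and $\cz_1=\cz_2$ if $a=0$.

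It then remains to compose. When $a<0$, the inclusion $\cz_1\hookrightarrow\cz_2$ of thick subcategories of $\ct$ is a fully faithful triangle functor, so
\[\cu_1\xrightarrow{\ \simeq\ }\cz_1\hookrightarrow\cz_2\xrightarrow{\ \simeq\ }\cu_2\]
is fully faithful, proving (a). When $a=0$ we have $\cz_1=\cz_2$, and the composite of the two triangle equivalences proves (b). When $a>0$ the inclusion $\cz_2\hookrightarrow\cz_1$ gives the fully faithful triangle functor $\cu_2\to\cu_1$ of (c).

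I do not expect a genuine obstacle here: the content is the bookkeeping just described. The one point requiring care is that both fundamental domains $\cz_1$ and $\cz_2$ must be seen to sit inside a single triangulated category $\ct$ with mutually compatible triangulated structures — this is precisely what the thickness of the middle term in Corollary~\ref{triangle equivalence2} (applied through Corollaries~\ref{orlov1} and \ref{orlov2}) guarantees — and that the elementary description of $\mod^{\ge\ell}$ together with the duality $(-)^*$ converts the trichotomy $a<0$, $a=0$, $a>0$ into the three nested inclusions above.
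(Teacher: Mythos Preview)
Your proposal is correct and follows exactly the approach the paper intends: the paper simply states that Corollary~\ref{orlov3} follows ``immediately'' by combining Corollaries~\ref{orlov1} and \ref{orlov2}, and your argument spells out precisely this combination --- comparing the two fundamental domains $\cz_1$ and $\cz_2$ via the elementary inclusion between $\mod^{>0}R^{\op}$ and $\mod^{>a}R^{\op}$, then transporting along the triangle equivalences. You have also correctly identified the one genuine point of care (thickness of the $\cz_i$ inside $\ct$, coming from Corollary~\ref{triangle equivalence2}), which the paper leaves implicit.
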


It is easy to show that the fully faithful functors in (a) and (c) are parts of stable t-structures.

%
%

\begin{remark}\label{a-invariant}
Assume (R0) and (R2).
\begin{itemize}
\item[(a)] The condition (R3) is clearly equivalent to that $\RHom_R(S,R)\in\cd^{\bo}(\mod^aR^{\op})$ holds for any simple $R$-module $S\in\mod^0R$ and $\RHom_{R^{\op}}(S',R)\in\cd^{\bo}(\mod^aR)$ holds for any simple $R^{\op}$-module $S'\in\mod^0R^{\op}$.
\item[(b)] The condition (R3) is satisfied if $R$ has \emph{$a$-invariant} (that is, the minus \emph{Gorenstein parameter}) $a$.
This means that there exists an integer $d$ such that, for any simple $R_0$-module $S$, there exists a simple $R_0^{\op}$-module $S'$ such that $S^*\simeq S'[-d](-a)$, and the same condition holds for simple $R_0^{\op}$-modules.
For example, this is satisfied if $R$ is ring-indecomposable and commutative.
\item[(c)] The condition (R3) is much weaker than the existence of $a$-invariant. 
For example, let $k$ be a separable field, and $A$ and $B$ be $\Z$-graded finite dimensional $k$-algebras. 
Assume that $A=A_0$ has finite global dimension and is not semisimple, and that $B$ is selfinjective and has an $a$-invariant.
Then $R:=A\otimes_kB$ is an Iwanaga--Gorenstein ring and satisfies the condition (R3), but does not has an $a$-invariant.
\end{itemize}
\end{remark}

\begin{proof}
(c) Since $k$ is separable, any simple $R$-module has the form $S\otimes_kT$ for a simple $A$-module $S$ and a simple $B$-module $T$.
Since $\RHom_R(S\otimes_kT,R)\simeq\RHom_A(S,A)\otimes_k\RHom_B(T,B)$ holds, the former statement follows.
It also follows that, if $R$ has an $a$-invariant, then so does $A$. On the other hand, it is easy to show that, if a $\Z$-graded finite dimensional $k$-algebra has an $a$-invariant, then it is selfinjective. 
Since $A$ has finite global dimension, it has to be semisimple, a contradiction.
\end{proof}

\subsection{The AGK category and Amiot--Guo--Keller's equivalence}\label{apply Amiot-Guo-Keller}

In cluster theory, an equivalence between the cluster category of an $n$-Calabi--Yau algebra $A$ and a certain full subcategory $\cz$ of the perfect derived category $\per A$ of $A$ plays a very important role (see a survey article \cite{Keller}). This was given by Amiot \cite{Amiot09} for $n=3$ and Guo \cite{Guolingyan11a} for general $n$ based on Keller's work \cite{Keller05, Keller11}.
The aim of this subsection is to deduce Amiot--Guo--Keller's equivalence from Theorem~\ref{equivalence}.

In fact, we will work in the following much wider setting. Let $k$ be a field and $D=\Hom_k(-,k)$.

\begin{definition}\label{serre quadruple}
 We say that $(\ct,\cs,\mathbb{S},\cm)$ is a \emph{relative Serre quadruple} if the following conditions are satisfied.
\begin{itemize}
\item[(RS0)] $\ct$ is a $k$-linear Hom-finite Krull--Schmidt triangulated category and $\cs$ is a thick subcategory of $\ct$.
\item[(RS1)] $\mathbb{S}:\cs\to\cs$ is a triangle equivalence such that there is a bifunctorial isomorphism for any $X\in\cs$ and $Y\in\ct$:
\[D\Hom_{\ct}(X,Y)\simeq \Hom_{\ct}(Y,\mathbb{S}X).\]
\item[(RS2)] $\cm$ is a silting subcategory of $\ct$ and $\ct=\cm[<\hspace{-3pt}0]^\perp\perp\cm[\ge\hspace{-3pt}0]^\perp$ is a t-structure of $\ct$ satisfying $\cm[\geq~\hspace{-3pt}0]^\perp\subset\cs$. Moreover, $\cm$ is a dualizing $k$-variety.
\end{itemize}
\end{definition}
Note that the last condition that $\cm$ is a dualizing $k$-variety is automatic if $\cm$ has an additive generator.
By \cite[Theorem 4.10]{IyamaYang},  (RS2) is equivalent to its dual:
\begin{itemize}
\item[(RS2$^{\op}$)] $\cm$ is a silting subcategory of $\ct$ and $\ct={}^\perp\cm[<\hspace{-3pt}0]\perp{}^\perp\cm[\ge\hspace{-3pt}0]$ is a t-structure of $\ct$ satisfying ${}^\perp\cm[<\hspace{-3pt}0]\subset\cs$. Moreover, $\cm$ is a dualizing $k$-variety.
\end{itemize}

Let us introduce the following new class of triangulated categories.

\begin{definition}\label{define amiot-guo-keller}
For a relative Serre quadruple $(\ct,\cs,\mathbb{S},\cm)$, we define the \emph{AGK category} as the Verdier quotient
\[\cc:=\ct/\cs.\]
We define the \emph{fundamental domain} as the full subcategory
\[\cz=\cx^\perp\cap{}^\perp\cy[1]\subset\ct,\ \mbox{ where }\ \cx=\cm[<\hspace{-3pt}0]^\perp\cap\cs\ \mbox{ and }\ \cy=\cm[\ge\hspace{-3pt}0]^\perp.\]
\end{definition}

The following theorem is the main result of this subsection.

\begin{theorem}\label{amiot-guo-keller}
Let $(\ct,\cs,\mathbb{S},\cm)$ be a relative Serre quadruple. Assume that one of the following conditions holds.
\begin{itemize}
\item[(i)] $\mathbb{S}:\cs\to\cs$ extends to a triangle equivalence $\mathbb{S}:\ct\to\ct$;
\item[(ii)] $\cm$ has an additive generator $M$. 
\end{itemize}
Then the composition
\[\cz\subset\ct\to\cc\]
is an equivalence. As a consequence, the AGK category $\cc$ is a Hom-finite Krull--Schmidt  triangulated category. Moreover, in case {\rm (i)}, $\cc$ has a Serre functor $\mathbb{S}\circ[-1]$.
\end{theorem}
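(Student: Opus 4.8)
The plan is to verify that a relative Serre quadruple $(\ct,\cs,\mathbb{S},\cm)$ satisfying (i) or (ii) produces data $(\ct,\cs,\cx,\cy)$ meeting the hypotheses (T0), (T1) and (T2) of Theorem~\ref{equivalence}, with the subcategories $\cz$ and $\cp$ of that theorem matching the fundamental domain $\cz$ of Definition~\ref{define amiot-guo-keller}; the equivalence $\cz\subset\ct\to\cc$ and the Hom-finite Krull--Schmidt conclusion then follow formally, and in case (i) the Serre functor is identified by a direct computation. Condition (T0) is immediate from (RS0). For (T2): by (RS2) we have a t-structure $\ct=\cm[<0]^\perp\perp\cm[\ge0]^\perp$ with $\cm[\ge0]^\perp=\cy$, so $\ct={}^\perp\cy\perp\cy$ holds, and dually (RS2$^{\op}$) gives the other t-structure; the point to check is that the first component ${}^\perp\cy$ equals $\cx^\perp$ where $\cx=\cm[<0]^\perp\cap\cs$, equivalently that $\cx^\perp=\cm[<0]^\perp$, which should follow because $\cm[<0]^\perp\subset\cs$ (here one uses that $\cm[<0]^\perp = \cm[<0]^\perp\cap\cs$ sits inside $\cs$, a consequence of the t-structure truncation together with $\cm[\ge0]^\perp\subset\cs$) and because $\thick\cm=\ct$ forces $\cm[<0]^{\perp\perp}$-type identities. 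I would spell this out carefully, since the interplay of the two t-structures with $\cs$ is the technical heart.

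The genuinely new input is (T1): we must produce a torsion pair $\cs=\cx\perp\cy$. The containment $\Hom_\ct(\cx,\cy)=0$ and $\cx,\cy\subset\cs$ are clear ($\cy=\cm[\ge0]^\perp\subset\cs$ by (RS2), and $\cx\subset\cs$ by definition), so the work is to show every $S\in\cs$ sits in a triangle $X\to S\to Y\to X[1]$ with $X\in\cx$, $Y\in\cy$, and that $\cx={}^\perp\cy\cap\cs$, $\cy=\cx^\perp\cap\cs$ in the appropriate internal sense. The natural candidate triangle is the truncation triangle of the ambient t-structure $\ct=\cm[<0]^\perp\perp\cm[\ge0]^\perp$ applied to $S$: it reads $\tau(S)\to S\to Y\to$ with $\tau(S)\in\cm[<0]^\perp$ and $Y\in\cm[\ge0]^\perp=\cy$, and since $S\in\cs$ and $Y\in\cs$ one gets $\tau(S)\in\cs$, hence $\tau(S)\in\cm[<0]^\perp\cap\cs=\cx$. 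The identification of $\cx$ and $\cy$ as mutual perps inside $\cs$ then follows from the corresponding statement for the ambient t-structure restricted to $\cs$. This is where conditions (i)/(ii) enter: the dualizing-variety/Serre-duality data in (RS1)--(RS2) is what guarantees the approximations exist as honest morphisms in the Hom-finite setting (covariant/contravariant finiteness of $\cx$, $\cy$), and in case (i) the global Serre functor $\mathbb{S}$ on $\ct$ lets one pass between ${}^\perp$ and $(-)^\perp$ conditions cleanly; in case (ii) one instead invokes that $\cm=\add M$ makes the relevant truncation functorial. I expect the main obstacle to be exactly this existence-of-approximations step — confirming that the abstract t-structure truncations restrict to $\cs$ and that all four subcategories are functorially finite where needed — rather than the formal deduction from Theorem~\ref{equivalence}.

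With (T0), (T1), (T2) in hand, Theorem~\ref{equivalence} gives an equivalence of additive categories $\cz/[\cp]\simeq\cc$, where $\cz=\cx^\perp\cap{}^\perp\cy[1]$ is precisely the fundamental domain and $\cp=\cx[1]\cap\cy$. I would then observe that because the ambient decomposition $\ct=\cm[<0]^\perp\perp\cm[\ge0]^\perp$ is a \emph{t-structure} (not merely a co-t-structure), the remark following Theorem~\ref{equivalence} gives $\cp=0$, so $\cz/[\cp]=\cz$ and the composition $\cz\subset\ct\to\cc$ is itself the equivalence. Since $\ct$ is Hom-finite Krull--Schmidt by (RS0) and $\cz$ is a full subcategory closed under the relevant structure, $\cc\simeq\cz$ inherits these properties (as already noted in the introduction, Theorem~\ref{equivalence} transports Hom-finiteness and Krull--Schmidt-ness through such a realization).

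Finally, for the Serre functor in case (i): assume $\mathbb{S}$ extends to a triangle auto-equivalence of $\ct$. The Verdier quotient $\cc=\ct/\cs$ inherits a functor $\overline{\mathbb{S}\circ[-1]}$ provided $\mathbb{S}\circ[-1]$ preserves $\cs$, which it does since $\mathbb{S}$ restricts to an auto-equivalence of $\cs$ by (RS1). For any $X,Y\in\cc$, represented by objects of $\ct$, I would compute $\Hom_\cc(X,Y)$ as a colimit over $\cs$-local morphisms and use (RS1)'s bifunctorial isomorphism $D\Hom_\ct(X,Y)\simeq\Hom_\ct(Y,\mathbb{S}X)$ — valid when one argument lands in $\cs$, and extendable across the localization because the cofibers of the localizing maps lie in $\cs$ — to obtain $D\Hom_\cc(X,Y)\simeq\Hom_\cc(Y,\mathbb{S}X[-1])$ bifunctorially. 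One must check naturality in $X$ and $Y$ and that no set-theoretic issue arises (guaranteed since $\cc$ is Hom-finite by the previous paragraph). The shift $[-1]$ appears because the Serre-duality formula in (RS1) is stated without a shift relative to $\cc$'s own triangulated structure, which is the one transported from $\cz$; tracking this shift carefully is the only delicate point here.
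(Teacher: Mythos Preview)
Your proposal has the verification of (T1) exactly right, and you correctly identify that $\cp=0$ because $\cs=\cx\perp\cy$ is a t-structure. But you have misplaced the role of hypotheses (i)/(ii), and this creates a genuine gap.

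The torsion pair $\cs=\cx\perp\cy$ does \emph{not} need (i) or (ii): as you say yourself, truncating $S\in\cs$ in the ambient t-structure $\ct=\cm[<0]^\perp\perp\cy$ and observing $Y\in\cy\subset\cs$ forces the aisle part into $\cx=\cm[<0]^\perp\cap\cs$. This is the paper's argument too, and it is unconditional.

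What actually requires (i)/(ii) is the torsion pair $\ct=\cx\perp\cx^\perp$ in (T2). Your attempt here is flawed: the claim ``$\cm[<0]^\perp\subset\cs$'' is false, since $\cm\subset\cm[<0]^\perp$ (presilting) while $\cm\not\subset\cs$ (else $\ct=\thick\cm\subset\cs$). Likewise, (RS2$^{\op}$) gives a t-structure with aisle $\cx'={}^\perp\cm[<0]$, which is not $\cx$; you cannot read off $\ct=\cx\perp\cx^\perp$ from it directly, and the identity $\cx^\perp=\cm[<0]^\perp$ you aim for is not true in general. The paper obtains $\ct=\cx\perp\cx^\perp$ as follows. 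In case (i), the extended Serre functor yields the key identification $\cx=\mathbb{S}({}^\perp\cm[<0])={}^\perp(\mathbb{S}\cm)[<0]$ via (RS1) and ${}^\perp\cm[<0]\subset\cs$; then (RS2$^{\op}$) applied with $\mathbb{S}\cm$ in place of $\cm$ produces the desired t-structure. In case (ii), one compares $\cx$ with $\cx'={}^\perp\cm[<0]$: both $\cs=\cx\perp\cy$ and $\cs=\cx'\perp\cy'$ are bounded t-structures, the heart of the first is $\mod\End_\ct(M)$ (finitely many simples, finite length), so one finds $n$ with $\cx[n]\subset\cx'$, and a short torsion-pair comparison lemma then upgrades the ambient torsion pair $\ct=\cx'\perp\cx'^\perp$ to $\ct=\cx\perp\cx^\perp$. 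Your sketch (``$\cm=\add M$ makes the truncation functorial'') does not capture this argument.

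For the Serre functor in case (i), your colimit-over-roofs approach is plausible but not what the paper does: it instead constructs, for each pair $X,Y$, a local $\cs$-envelope $X\to X''$ (using the co-t-structure on $\ct$ to place $Y$ and the t-structure truncation of $X$), and then invokes Amiot's results on bilinear forms and local envelopes to conclude. If you pursue your approach you would need to justify carefully why the relative Serre duality, valid only when one argument lies in $\cs$, passes to the quotient; the paper's envelope argument handles exactly this subtlety.
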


\begin{proof}
By (RS2), we have a t-structure $\ct={}^\perp\cy\perp\cy$ with ${}^\perp\cy=\cm[<\hspace{-3pt}0]^\perp$. Therefore, for any $S\in\cs$, there exists a triangle $S'\to S\to Y\to S'[1]$ with $S'\in \cm[<\hspace{-3pt}0]^\perp$ and $Y\in\cy$. 
Since $S'$ belongs to $Y[-1]*S\in\cs*\cs=\cs$, it belongs to $\cx$. Thus we have a t-structure $\cs=\cx\perp\cy$. 

Now we show that $\ct=\cx\perp\cx^\perp$ is a t-structure in both cases (i) and (ii). Consequently, (T0), (T1) and (T2) in Theorem \ref{equivalence} are satisfied, and hence  the composition $\cz=\cx^\perp\cap{}^\perp\cy[1]\subset\ct\to\cc$ is an equivalence since $\cp=\cx[1]\cap\cy=0$.

First, we assume (i). Using the relative Serre duality (RS1) and the assumption ${}^\perp\cm[<\hspace{-3pt}0]\subset\cs$ in (RS2$^{\op}$), we have $\cx=\mathbb{S}({}^\perp\cm[<\hspace{-3pt}0])={}^\perp(\mathbb{S}\cm)[<\hspace{-3pt}0]$.
By (RS2$^{\op}$), we have a t-structure
$\ct={}^\perp(\mathbb{S}\cm)[<\hspace{-3pt}0]\perp{}^\perp(\mathbb{S}\cm)[\ge\hspace{-3pt}0]=\cx\perp\cx^\perp$ with $\cx^\perp={}^\perp(\mathbb{S}\cm)[\ge\hspace{-3pt}0]$.

Next we assume (ii). Using (RS2$^{\op}$), the dual argument to the first paragraph shows that we have t-structures $\ct=\cx'\perp\cx'{}^\perp$ and $\cs=\cx'\perp\cy'$ for $\cx'={}^\perp\cm[<\hspace{-3pt}0]$ and $\cy'=({}^\perp\cm[\ge\hspace{-3pt}0])\cap\cs$.

The t-structures $\cs=\cx\perp\cy$ and $\cs=\cx'\perp\cy'$ are bounded by \cite[Lemma 5.2]{IyamaYang}.
Hence the heart $\ch=\cx\cap\cy[1]$ is equivalent to the category of finite-dimensional modules over the finite dimensional $k$-algebra $\End_\ct(M)$ by \cite[Proposition 4.8]{IyamaYang}. 
Thus any object in the abelian category $\ch$ has finite length, and moreover $\ch$ contains only finitely many simple objects up to isomorphism. Let $L$ be the direct sum of a complete set of pairwise non-isomorphic simple objects of $\ch$.
Since $\cs=\cx'\perp\cy'$ is bounded, there exists $n\gg0$ such that $L[n]\in\cx'$. Then $\ch[n]\subset\cx'$ and hence $\cx[n]\subset\cx'$ holds.
By Lemma \ref{torsion pair criterion} below, we have a torsion pair $\ct=\cx\perp\cx^\perp$, which is a t-structure.

\begin{lemma}\label{torsion pair criterion}
Let $\ct$ be a triangulated category, $\cs$ a thick subcategory of $\ct$, and $\cs=\cx\perp\cy=\cx'\perp\cy'$ and $\ct=\cx'\perp\cx'{}^\perp$ torsion pairs.
If $\cx[n]\subset\cx'$ holds for some integer $n$, then we have a torsion pair $\ct=\cx\perp\cx^\perp$.
\end{lemma}

\begin{proof}
Replacing $\cx$ by $\cx[n]$, we can assume $n=0$.
It suffices to show $\cx*\cx^\perp\supset\ct$. This follows from
\begin{align*}
\cx*\cx^\perp
&=\cx*\cy*\cx^\perp \qquad (\cx^\perp=\cy*\cx^\perp)\\
&=\cs*\cx^\perp \qquad (\cx*\cy=\cs)\\
&\supset\cx'*\cx'^\perp\qquad (\cs\supset\cx',\ \cx^\perp\supset\cx'{}^\perp)\\
&=\ct.\qedhere
\end{align*}
\end{proof}

Let us continue the proof of Theorem~\ref{amiot-guo-keller}. It remains to prove the existence of Serre duality in case (i). Let $X$ and $Y$ be objects of $\ct$.
Since $\cm$ is a silting subcategory of $\ct$, we have a bounded co-t-structure $\ct={}^\perp\cm[\ge\hspace{-3pt}0]\perp\cm[<\hspace{-3pt}0]^\perp$ (see for example \cite[Proposition 2.8]{IyamaYang}).
It follows that there exists an integer $i$ such that $Y$ belongs to ${}^\perp\cm[\ge\hspace{-3pt}-i+1]$.
Now by (RS2) there is a triangle
\[
\xymatrix{
X'\ar[r] & X\ar[r] & X''\ar[r] &X'[1]
}
\]
with $X'\in \cm[<\hspace{-3pt}-i+1]^\perp$ and $X''\in \cm[\ge\hspace{-3pt}-i+1]^\perp$. Since $\Hom_\ct(Y,X')=0$, it follows that the induced homomorphism $\Hom_{\ct}(Y,X)\rightarrow\Hom_{\ct}(Y,X'')$ is injective.
So the morphism $X\rightarrow X''$ is a local $\cs$-envelope of $X$ relative to $Y$ in the sense of \cite[Definition 1.2]{Amiot09}.
Therefore by \cite[Lemma 1.1, Theorem 1.3 and Proposition 1.4]{Amiot09} we obtain that $\mathbb{S}\circ[-1]$ is a Serre functor of $\cc$.
\end{proof}

Let $(\ct,\cs,\mathbb{S},\cm)$ be a relative Serre quadruple and $n\geq 2$ an integer.
If $\mathbb{S}\simeq [n]$, then we call the triple $(\ct,\cs,\cm)$ an \emph{$n$-Calabi--Yau triple} and call $\cc=\ct/\cs$ the \emph{cluster category}  \cite[Section 5]{IyamaYang}.
A typical case is given by a bimodule $n$-Calabi--Yau non-positive dg algebra $A$ with $H^0(A)$ being finite-dimensional. In this case, $(\ct,\cs,\cm):=(\per A,\cd_{\rm fd}(A),\add A)$ is an $n$-Calabi--Yau triple (\cite[Section 2]{Amiot09}, \cite[Section 2]{Guolingyan11a}).

Let $(\ct,\cs,\cm)$ be an $n$-Calabi--Yau triple. Then
\[\cz=\cm[\le\hspace{-3pt}0]^\perp\cap{}^\perp \cm[\ge\hspace{-3pt}n]=\cm[1]*\cm[2]*\cdots*\cm[n-1].\]
As a special case of Theorem~\ref{amiot-guo-keller}, we obtain the following result which was obtained by Amiot \cite[Proposition 2.9]{Amiot09} and Guo 
\cite[Proposition 2.15]{Guolingyan11a} for bimodule Calabi--Yau dg algebras and by the authors in the general setting in \cite{IyamaYang}. It plays a crucial role in cluster theory.

\begin{corollary}[{\cite[Theorem 5.8(b)]{IyamaYang}}]\label{amiot-guo-keller-CY}
For an $n$-Calabi--Yau triple $(\ct,\cs,\cm)$, the composition
\[\cz\subset\ct\to\cc\]
is an equivalence. Moreover $\cc$ is an $(n-1)$-Calabi-Yau triangulated category.
\end{corollary}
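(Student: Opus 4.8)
The plan is to verify that an $n$-Calabi--Yau triple $(\ct,\cs,\cm)$ gives rise to a relative Serre quadruple $(\ct,\cs,\mathbb S,\cm)$ with $\mathbb S=[n]$, to which Theorem~\ref{amiot-guo-keller} applies in case~(i), and then to identify the fundamental domain $\cz$ explicitly and to check the $(n-1)$-Calabi--Yau property. First I would recall the definition of an $n$-Calabi--Yau triple from \cite[Section 5]{IyamaYang}: it is exactly the data needed so that (RS0), (RS1) with $\mathbb S=[n]|_{\cs}$, and (RS2) hold, so $(\ct,\cs,[n],\cm)$ is a relative Serre quadruple. The key point for applying Theorem~\ref{amiot-guo-keller} is that $\mathbb S=[n]$ manifestly extends to a triangle autoequivalence of all of $\ct$ (namely the $n$-fold shift), so hypothesis~(i) is satisfied automatically; hence the composition $\cz\subset\ct\to\cc$ is an equivalence, $\cc$ is Hom-finite Krull--Schmidt, and $\cc$ has Serre functor $\mathbb S\circ[-1]=[n-1]$.

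Next I would compute $\cz$ concretely. Unwinding Definition~\ref{define amiot-guo-keller} with $\mathbb S=[n]$: we have $\cx=\cm[<\!0]^\perp\cap\cs$ and $\cy=\cm[\ge\!0]^\perp$, so $\cx^\perp$ and ${}^\perp\cy[1]$ must be rewritten using the relative Serre duality. From the proof of Theorem~\ref{amiot-guo-keller}, case~(i), one gets $\cx^\perp={}^\perp(\mathbb S\cm)[\ge\!0]={}^\perp\cm[\ge\!n]$ and, by a parallel computation, ${}^\perp\cy[1]$ translates into $\cm[\le\!0]^\perp$ after using the silting $t$-structure and the Calabi--Yau shift. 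This yields $\cz=\cm[\le\!0]^\perp\cap{}^\perp\cm[\ge\!n]$. To see that this equals $\cm[1]*\cm[2]*\cdots*\cm[n-1]$, I would use that $\cm$ is silting: the bounded co-$t$-structure $\ct={}^\perp\cm[\ge\!0]\perp\cm[<\!0]^\perp$ and its iterates (see \cite[Proposition 2.8]{IyamaYang}) show that any object of $\cm[\le\!0]^\perp\cap{}^\perp\cm[\ge\!n]$ is filtered by shifts $\cm[1],\dots,\cm[n-1]$, and conversely each such filtered object has the required vanishing by the presilting condition $\Hom_\ct(\cm,\cm[>\!0])=0$ applied with appropriate shifts. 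This is the standard description of the fundamental domain and is essentially \cite[Theorem 5.8]{IyamaYang}.

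Finally, for the Calabi--Yau statement: by Theorem~\ref{amiot-guo-keller}~(i) the AGK category $\cc$ has Serre functor $\mathbb S\circ[-1]$, and since $\mathbb S\simeq[n]$ on $\cs$ and this identification is compatible with the equivalence $\cc\simeq\cz/[0]=\cz$ (because $\cp=0$ here, as $\cx[1]\cap\cy=0$), the Serre functor of $\cc$ is $[n]\circ[-1]=[n-1]$. A triangulated category whose Serre functor is $[n-1]$ is by definition $(n-1)$-Calabi--Yau, which gives the last assertion.

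\textbf{Main obstacle.} I expect the only real work is the explicit identification $\cz=\cm[\le\!0]^\perp\cap{}^\perp\cm[\ge\!n]=\cm[1]*\cdots*\cm[n-1]$ — in particular rewriting the two "perp of perp" conditions defining $\cz$ into the displayed shifted-perp form via relative Serre duality, and then the induction over $n$ using the co-$t$-structure to produce the $*$-decomposition. Everything else is a direct citation of the definition of an $n$-Calabi--Yau triple and of Theorem~\ref{amiot-guo-keller}, together with the trivial observation that the shift functor $[n]$ extends the abstract $\mathbb S$.
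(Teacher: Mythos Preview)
Your proposal is correct and follows essentially the same approach as the paper: the paper simply observes that an $n$-Calabi--Yau triple is by definition a relative Serre quadruple with $\mathbb{S}\simeq[n]$, which trivially extends to all of $\ct$, so Theorem~\ref{amiot-guo-keller}(i) applies and the Serre functor of $\cc$ is $[n-1]$. One small inaccuracy: the identification ${}^\perp\cy[1]=\cm[\le 0]^\perp$ does \emph{not} require the Calabi--Yau shift---it follows directly from the t-structure in (RS2), since ${}^\perp\cy=\cm[<0]^\perp$ and shifting by $[1]$ gives $\cm[\le 0]^\perp$; it is only the computation of $\cx^\perp$ that genuinely uses $\mathbb{S}=[n]$ via the argument in the proof of Theorem~\ref{amiot-guo-keller}(i).
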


\section{Proof of Main results}\label{s:proofs}

In this section we prove our main results Theorems~\ref{equivalence} and \ref{triangle equivalence}.

\subsection{Proof of Theorem \ref{equivalence}}
In this subsection, we prove Theorem \ref{equivalence}.
First, notice that
\[\cs\cap\cz=(\cs\cap\cx^\perp)\cap(\cs\cap{}^\perp\cy[1])=\cy\cap\cx[1]=\cp\]
hold by (T1). Thus the functor $\cz\to\cu$ induces a functor
\begin{equation}\label{Z/P to U}
\cz/[\cp]\to\cu.
\end{equation}
Next we show that this is dense.

\begin{lemma}\label{dense}
For any $X\in\ct$, there exists $Y\in\cz$ satisfying $X\simeq Y$ in $\cu$.
As a consequence, the functor \eqref{Z/P to U} is dense.
\end{lemma}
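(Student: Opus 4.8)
The plan is to use the two torsion pairs in (T2) to push an arbitrary object $X\in\ct$ into $\cz$ by a two-step truncation, then check that the truncation does not change $X$ in the Verdier quotient $\cu$. Recall $\cz=\cx^\perp\cap{}^\perp\cy[1]$, so I need to produce from $X$ an object lying in both $\cx^\perp$ and ${}^\perp\cy[1]$.

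**First I would** apply the torsion pair $\ct=\cx\perp\cx^\perp$ to $X$: there is a triangle $X_\cx\to X\to X'\to X_\cx[1]$ with $X_\cx\in\cx$ and $X'\in\cx^\perp$. Since $\cx\subset\cs$ and $\cs$ is thick, $X_\cx\in\cs$, so the map $X\to X'$ becomes an isomorphism in $\cu=\ct/\cs$; thus I may replace $X$ by $X'\in\cx^\perp$. **Next I would** apply the other torsion pair $\ct={}^\perp\cy\perp\cy$, shifted, to $X'$: there is a triangle $Y'\to X'\to Y_\cy\to Y'[1]$ with $X'\to Y_\cy$ a left ${}^\perp\cy$-... more precisely, I want $Y'\in{}^\perp\cy[1]$, i.e. I apply the torsion pair to get $Y''\to X'\to W\to Y''[1]$ with $W\in\cy[1]$ and $Y''\in{}^\perp\cy[1]$. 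Again $\cy[1]\subset\cs[1]=\cs$ (as $\cs$ is thick, hence shift-stable), so $W\in\cs$, and the map $Y''\to X'$ is an isomorphism in $\cu$. Hence $X\simeq X'\simeq Y''$ in $\cu$.

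**The point that requires care** is that after the second truncation I must still have membership in $\cx^\perp$. So I would take $Y''\in{}^\perp\cy[1]$ fitting in $Y''\to X'\to W\to Y''[1]$ with $W\in\cy[1]$, and verify $Y''\in\cx^\perp$: apply $\Hom_\ct(\cx,-)$ to this triangle. Since $X'\in\cx^\perp$ we have $\Hom_\ct(\cx,X')=0$, and since $\cy[1]\subset\cs$ and $\cx\subset\cs$ with the torsion pair structure of $\cs=\cx\perp\cy$ giving $\Hom_\cs(\cx,\cy)=0$, hence $\Hom_\ct(\cx,\cy[1])=0$, we also get $\Hom_\ct(\cx,W)=0$; the long exact sequence then forces $\Hom_\ct(\cx,Y'')=0$, so $Y''\in\cx^\perp$. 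Therefore $Y:=Y''\in\cx^\perp\cap{}^\perp\cy[1]=\cz$ and $X\simeq Y$ in $\cu$, which is exactly the claim; density of \eqref{Z/P to U} is then immediate since every object of $\cu$ is (isomorphic to) an object of $\ct$, hence to an object of $\cz$.

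**The main obstacle** I anticipate is bookkeeping the shifts correctly: one must be consistent about whether to use $\ct={}^\perp\cy\perp\cy$ directly or its shift, so that the truncation lands in ${}^\perp\cy[1]$ rather than ${}^\perp\cy$, and one must confirm at each stage that the "error term" genuinely lies in $\cs$ — this uses only that $\cx,\cy\subset\cs$ and that $\cs$ is thick, so it is routine once the indexing is pinned down. There is no real difficulty beyond this; the argument is a direct two-step application of the truncation triangles supplied by (T2).
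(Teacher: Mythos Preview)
Your approach is essentially the same as the paper's: a two-step truncation using the torsion pairs from (T2), checking afterwards that the second truncation preserves membership in the subcategory produced by the first. The paper performs the truncations in the opposite order (first into ${}^\perp\cy[1]$, then into $\cx^\perp$), but the two arguments are completely symmetric.

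One genuine slip in your write-up, exactly of the bookkeeping kind you anticipated: in the triangle $Y''\to X'\to W\to Y''[1]$ with $W\in\cy[1]$, the long exact sequence for $\Hom_\ct(\cx,-)$ reads
\[
\Hom_\ct(\cx,W[-1])\to\Hom_\ct(\cx,Y'')\to\Hom_\ct(\cx,X'),
\]
so to conclude $\Hom_\ct(\cx,Y'')=0$ you need $\Hom_\ct(\cx,W[-1])=0$, not $\Hom_\ct(\cx,W)=0$. Since $W[-1]\in\cy$, this is precisely $\Hom_\ct(\cx,\cy)=0$, which is given by (T1). Your stated claim $\Hom_\ct(\cx,\cy[1])=0$ is neither needed nor available under (T1) alone (it would require, e.g., the t-structure condition $\cx[-1]\subset\cx$). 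With this correction the argument is complete; alternatively, you can phrase it as the paper does: $Y''$ is an extension of $X'\in\cx^\perp$ by $W[-1]\in\cy\subset\cx^\perp$, and $\cx^\perp$ is extension-closed.
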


\begin{proof} Let $T\in\cu$. 
Since $\ct={}^\perp\cy[1]\perp\cy[1]$ holds by (T2), we have a triangle
\[
\xymatrix{
T'\ar[r] & T\ar[r] & Y[1]\ar[r] & T'[1] & (T'\in{}^\perp\cy[1],\ Y\in\cy).
}
\]
Then we have $T\simeq T'$ in $\cu$. Since $\ct=\cx\perp\cx^\perp$ holds again by (T2), we have a triangle
\[
\xymatrix{
X\ar[r] & T'\ar[r] & T''\ar[r] & X[1] & (X\in\cx,\ T''\in\cx^\perp).
}
\]
Then we have $T\simeq T'\simeq T''$ in $\cu$. Since both $T'$ and $X[1]$ belongs to ${}^\perp\cy[1]$ by (T1), so does $T''$.
Thus $T''$ belongs to $\cx^\perp\cap{}^\perp\cy[1]=\cz$, and we have an isomorphism $T\simeq T''$ in $\cu$.
\end{proof}

Next we prepare the following.

\begin{lemma}\label{cp}
We have $\cx[1]\subset\cx\perp\cp$ and $\cy\subset\cp\perp\cy[1]$.
\end{lemma}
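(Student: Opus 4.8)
The plan is to derive both inclusions directly from the torsion pair $\cs=\cx\perp\cy$ of (T1), applied to the objects $X[1]$ and $Y[-1]$, which lie in $\cs$ precisely because $\cs$ is a thick (hence shift-closed) subcategory of $\ct$. Two auxiliary facts are used throughout: that the torsion classes $\cx$ and $\cy$ are closed under extensions (immediate from $\cx={}^\perp\cy$, $\cy=\cx^\perp$ computed inside $\cs$ together with the long exact $\Hom$-sequences), and that $\Hom_\ct$ agrees with $\Hom_\cs$ on objects of $\cs$, so the orthogonality $\Hom_\ct(\cx,\cy)=0$ is available inside $\ct$.

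For the inclusion $\cx[1]\subset\cx\perp\cp$, fix $X\in\cx$. Since $\cs$ is thick, $X[1]\in\cs$, so (T1) yields a triangle $X_1\to X[1]\to Y_1\to X_1[1]$ with $X_1\in\cx$ and $Y_1\in\cy$. Rotating this triangle exhibits $Y_1$ as sitting in a triangle $X[1]\to Y_1\to X_1[1]\to X[2]$, i.e.\ $Y_1\in(X*X_1)[1]$; as $\cx$ is closed under extensions, $X*X_1\subset\cx$ and hence $Y_1\in\cx[1]$. Combined with $Y_1\in\cy$ this gives $Y_1\in\cx[1]\cap\cy=\cp$, so the original triangle shows $X[1]\in\cx*\cp$. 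Finally $\Hom_\ct(\cx,\cp)\subset\Hom_\ct(\cx,\cy)=0$, so $\cx*\cp=\cx\perp\cp$ and $X[1]\in\cx\perp\cp$.

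For the inclusion $\cy\subset\cp\perp\cy[1]$, fix $Y\in\cy$, so $Y[-1]\in\cs$, and (T1) gives a triangle $X_2\to Y[-1]\to Y_2\to X_2[1]$ with $X_2\in\cx$ and $Y_2\in\cy$. Rotating produces a triangle $Y_2\to X_2[1]\to Y\to Y_2[1]$, which presents $X_2[1]$ as an extension of $Y$ by $Y_2$; since $\cy$ is closed under extensions, $X_2[1]\in\cy$, and together with $X_2[1]\in\cx[1]$ this yields $X_2[1]\in\cp$. A further rotation gives the triangle $X_2[1]\to Y\to Y_2[1]\to X_2[2]$, so $Y\in\cp*\cy[1]$. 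Since $\cp[-1]\subset\cx$, we have $\Hom_\ct(\cp,\cy[1])=\Hom_\ct(\cp[-1],\cy)\subset\Hom_\ct(\cx,\cy)=0$, whence $\cp*\cy[1]=\cp\perp\cy[1]$ and $Y\in\cp\perp\cy[1]$.

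There is no real obstacle here; the only points demanding attention are the bookkeeping of shifts in the rotated triangles (choosing the correct number of rotations so that the desired object occupies the middle vertex) and the verification that $\cx$ and $\cy$ are extension-closed and that the torsion-pair orthogonality may legitimately be read inside $\ct$ rather than only inside $\cs$.
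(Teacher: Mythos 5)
Your argument is correct and matches the paper's proof exactly: both decompose $X[1]$ (respectively $Y[-1]$) using the torsion pair of (T1), rotate to show the $\cy$-part (respectively the shifted $\cx$-part) lands in $\cp$ via extension-closedness, and then check the required orthogonality. The paper only writes out the first inclusion and leaves the second as dual; you have spelled out both, with the same ideas.
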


\begin{proof}
We only prove the first assertion.
For $X\in\cx$, we take a triangle $X'\to X[1]\to Y\to X'[1]$ with $X'\in\cx$ and $Y\in\cy$ by (T1).
Since $\cx[1]$ is extension closed, we have $Y\in\cx[1]\cap\cy=\cp$.
Thus $X[1]\in\cx*\cp=\cx\perp\cp$.
\end{proof}

Finally we show that our functor is fully faithful.

\begin{lemma}\label{fully faithful}
The functor \eqref{Z/P to U} is fully faithful.
\end{lemma}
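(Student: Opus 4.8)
The plan is to show that the functor $\cz/[\cp]\to\cu$ is full and faithful by exploiting the two torsion pairs in (T2) to compute morphisms in the Verdier quotient explicitly. Recall that a morphism $X\to Y$ in $\cu=\ct/\cs$ with $X,Y\in\cz$ is represented by a roof $X\xleftarrow{s} X'\to Y$ where the cone of $s$ lies in $\cs$, and two such roofs are identified in the usual calectic way. The key point is that, because $X\in\cz=\cx^\perp\cap{}^\perp\cy[1]$ and because $\cs=\cx\perp\cy$, one can normalize the roof: given the triangle $Z\to X'\xrightarrow{s} X\to Z[1]$ with $Z\in\cs$, decompose $Z$ via the torsion pair $\cs=\cx\perp\cy$ as $X_Z\to Z\to Y_Z\to X_Z[1]$ and use the resulting octahedra to replace $s$ by a morphism whose cone lies in $\cx$ (resp. in $\cy[1]$), using $\Hom_\ct(\cx, X)\subseteq\Hom_\ct(\cx, \cx^\perp)=0$ and $\Hom_\ct({}^\perp\cy[1], \cy[1])=0$.

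\smallskip

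\emph{Faithfulness.} I would take $f\in\Hom_\ct(X,Y)$ with $X,Y\in\cz$ whose image in $\cu$ is zero, and show $f$ factors through $\cp$. Vanishing in $\cu$ means there is $s\colon X'\to X$ with cone in $\cs$ such that $fs=0$ in $\ct$. Using (T2) write a triangle $X''\xrightarrow{t} X'\to W\to X''[1]$ with $W\in\cx$ (applying $\ct={}^\perp\cy\perp\cy$ or dually, after first peeling off the relevant piece so that the cone of $st$ lands in $\cy$, say), reduce to the case where $\mathrm{cone}(s)\in\cy$ or $\in\cx[1]$. Concretely: since the cone $Z$ of $s$ is in $\cs=\cx\perp\cy$, build the octahedron on $X_Z\to Z\to Y_Z$ to split $s$ as $X'\xrightarrow{s_1} X_1\xrightarrow{s_2} X$ with $\mathrm{cone}(s_1)=Y_Z[-1]\in\cy[-1]$ and $\mathrm{cone}(s_2)=X_Z[1]$... actually it is cleaner to handle the two halves separately. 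The upshot is that $f$ vanishing after precomposition with a map whose cone is in $\cx[1]$ forces $f$ to factor through that cone, which by Lemma~\ref{cp} (via $\cx[1]\subset\cx\perp\cp$ and $\Hom_\ct(\cx,Y)\subseteq\Hom_\ct(\cx,\cx^\perp)=0$ since $Y\in\cz\subset\cx^\perp$) means $f$ factors through $\cp$; dually for the $\cy$-half using $Y\in{}^\perp\cy[1]$, hence $\Hom_\ct(X,\cy)$ interacts through $\cp\perp\cy[1]$.

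\smallskip

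\emph{Fullness.} Given a morphism $X\to Y$ in $\cu$ with $X,Y\in\cz$, represent it by $X\xleftarrow{s}X'\xrightarrow{g}Y$ with $\mathrm{cone}(s)\in\cs$. Decompose $\mathrm{cone}(s)$ by $\cs=\cx\perp\cy$ and use the octahedral axiom to factor $s$ as $X'\xrightarrow{s'} X''\xrightarrow{s''} X$ where $\mathrm{cone}(s'')\in\cx[1]$ and $\mathrm{cone}(s')\in\cy$. First, since $\mathrm{cone}(s'')\in\cx[1]\subset\cx\perp\cp$ and $X\in\cx^\perp$, the connecting map $X\to\mathrm{cone}(s'')$ kills the $\cx$-part, so it factors through $\cp\subset\cz$; combined with $\Hom_\ct(\cx[1],X)$-vanishing this lets me lift $1_X$ (up to something through $\cp$) along $s''$, replacing the roof by one whose denominator has cone in $\cy$. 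Then dually, using $X'\to X''$ with cone in $\cy$ and the target $Y\in{}^\perp\cy[1]$, together with Lemma~\ref{cp}'s inclusion $\cy\subset\cp\perp\cy[1]$, I extend $g$ over $X''$ modulo $\cp$, arriving at an honest morphism $X\to Y$ in $\ct$ whose image in $\cu$ is the given one. This shows the functor is full, and the factorizations used show the lifts are well-defined modulo $[\cp]$.

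\smallskip

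The main obstacle is the bookkeeping in the fullness argument: one must carefully arrange the octahedra so that the two normalization steps (cone in $\cx[1]$, then cone in $\cy$) do not interfere, and verify that the resulting morphism in $\ct$ is independent, modulo $[\cp]$, of the choices of representing roof and of the splittings — equivalently, that the two triangles (from $\cx[1]\subset\cx\perp\cp$ and $\cy\subset\cp\perp\cy[1]$ in Lemma~\ref{cp}) are compatible. The vanishing conditions $\Hom_\ct(\cx,\cx^\perp)=0$ and $\Hom_\ct({}^\perp\cy[1],\cy[1])=0$ coming from (T2), applied to the endpoints lying in $\cz=\cx^\perp\cap{}^\perp\cy[1]$, are exactly what makes every ambiguous choice differ by something factoring through $\cp$, so the argument closes.
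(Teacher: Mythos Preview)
Your overall strategy matches the paper's: decompose objects of $\cs$ via the torsion pair $\cs=\cx\perp\cy$, then use $M,N\in\cz=\cx^\perp\cap{}^\perp\cy[1]$ to kill the relevant compositions, with Lemma~\ref{cp} supplying the $\cp$-factorization in the faithfulness step. So the ideas are right; what is missing is a clean execution.

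Two concrete issues. First, for fullness you work with left roofs $X\xleftarrow{s}X'\to Y$ and propose a two-stage normalization of $s$ (cone in $\cx[1]$, then cone in $\cy$). The step ``lift $1_X$ (up to something through $\cp$) along $s''$'' is not justified: from the triangle $X''\xrightarrow{s''}X\to C$ with $C\in\cx[1]$ you would need $\Hom_\ct(X,C)=0$ to split $s''$, but $X\in\cx^\perp$ only gives $\Hom_\ct(\cx,X)=0$, not $\Hom_\ct(X,\cx[1])=0$. Invoking $\cx[1]\subset\cx\perp\cp$ does not directly help here because the map goes \emph{from} $X$, not \emph{to} $X$. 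The paper sidesteps this entirely by using right roofs $M\xrightarrow{f}T\xleftarrow{s}N$: decomposing $\mathrm{cone}(s)\in\cs$ and applying a single octahedron (using $N\in\cx^\perp$) produces a triangle $N\to T'\to Y[1]\to N[1]$; then $M\in{}^\perp\cy[1]$ forces the composite $M\to T'\to Y[1]$ to vanish, so $cf$ factors through $cs$, giving $e\in\Hom_\ct(M,N)$ with $s^{-1}f=e$ in $\cu$. No two-step normalization, and Lemma~\ref{cp} is not even needed for fullness.

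Second, for faithfulness your reduction ``$fs=0$ for some $s$ with $\mathrm{cone}(s)\in\cs$'' is correct but roundabout; the paper simply uses that $f=0$ in $\cu$ means $f$ factors through some $S\in\cs$, decomposes $S$ via $\cx\perp\cy$ to push the factorization into $\cy$ (using $N\in\cx^\perp$), and then applies Lemma~\ref{cp} in the form $\cy\subset\cp\perp\cy[1]$ together with $M\in{}^\perp\cy[1]$ to land in $\cp$. Your sketch gestures at this (``dually for the $\cy$-half'') but never writes it down; the phrases ``actually it is cleaner\ldots'' and ``The upshot is\ldots'' are where the proof should be.
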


\begin{proof}
For $M,N\in\cz$, we consider the natural map $\Hom_{\cz/[\cp]}(M,N)\to\Hom_{\cu}(M,N)$.

We first show the surjectivity. 

Any morphism in $\Hom_{\cu}(M,N)$ has a representative of the form $M\xrightarrow{f}T\xleftarrow{s}N$, where $f\in\Hom_{\ct}(M,T)$ and $s\in\Hom_{\ct}(N,T)$, such that the cone of $s$ is in $\SS$.
Take a triangle
\[
\xymatrix{
N\ar[r]^{s} & T\ar[r] & S\ar[r]^(0.4){a} & N[1] & (S\in\SS).
}
\]
By (T1), there exists a triangle
\[
\xymatrix{
X[1]\ar[r]^{b} &S\ar[r] &Y[1]\ar[r] & X[2] & (X\in\cx,\ Y\in\cy).
}
\]
Since $ab=0$ by $X\in\cx$ and $N\in\cz\subset\cx^\perp$, we have the following commutative diagram of triangles by the octahedral axiom.
\[\xymatrix{
&X[1]\ar@{=}[r]\ar[d]&X[1]\ar[d]^b\\
N\ar[r]^s\ar@{=}[d]&T\ar[r]\ar[d]^c&S\ar[r]^a\ar[d]&N[1]\ar@{=}[d]\\
N\ar[r]^{cs}&T'\ar[r]^d\ar[d]&Y[1]\ar[r]\ar[d]&N[1]\\
&X[2]\ar@{=}[r]&X[2]
}\]
Then we have $dcf=0$ by $M\in\cz\subset{}^\perp\cy[1]$ and $Y\in\cy$.
Thus there exists $e\in\Hom_{\ct}(M,N)$ such that $cf=cse$.
Now $c(f-se)=0$ implies that $f-se$ factors through $X[1]\in\cs$.
Thus $f=se$ and $s^{-1}f=e$ hold in $\cu$, and we have the assertion.

\smallskip
Next we show the injectivity.

Assume that a morphism $f\in\Hom_{\ct}(M,N)$ is zero in $\cu$. 
Then it factors through $\cs$ (by, for example, \cite[Lemma 2.1.26]{Neeman01b}), that is, there exist $S\in\cs$, $g\in\Hom_{\ct}(M,S)$ and
$a\in\Hom_{\ct}(S,N)$ such that $f=ag$.
By (T1), there exists a triangle
\[
\xymatrix{
X\ar[r]^(0.55){b} & S\ar[r]^(0.4){c} & Y\ar[r] & X[1] & (X\in\cx,\ Y\in\cy).
}
\]
Since $ab=0$ by $X\in\cx$ and $N\in\cz\subset\cx^\perp$, there exists $d\in\Hom_{\ct}(Y,N)$ such that $a=dc$.
\[\xymatrix{
X\ar[r]^(0.55){b} & S\ar[r]^(0.4){c}\ar[dr]_a & Y\ar[d]^d\\
M\ar[rr]_f\ar[ru]_g&&N
}
\]
By Lemma \ref{cp}, there exists a triangle
\[
\xymatrix{
P\ar[r] & Y\ar[r]^(0.47){e} & Y'[1]\ar[r] & P[1] & (P\in\cp,\ Y'\in\cy).
}
\]
Then we have $ecg=0$ by $M\in\cz\subset{}^\perp\cy[1]$ and $Y'\in\cy$.
Thus $cg$ factors through $P$, and $f=dcg=0$ in $\cz/[\cp]$.
\end{proof}

Theorem \ref{equivalence} now follows, being the union of Lemmas \ref{dense} and \ref{fully faithful}.
\qed

\subsection{Proof of Theorem \ref{triangle equivalence}}\label{proof of Theorem2}

In this subsection, we prove Theorem \ref{triangle equivalence}. We start with the following observations.

\begin{lemma}\label{mutation pair}
Under the assumptions {\rm(T0)}, {\rm(T1$^\prime$)} and {\rm(T2)}, the following assertions hold.
\begin{itemize}
\item[(a)] $\cp\subset\cz$ and $\Hom_{\ct}(\cp,\cz[1])=0=\Hom_{\ct}(\cz,\cp[1])$ hold. 
\item[(b)] For any $Z\in\cz$, there exists a triangle $Z'\xrightarrow{a}P\xrightarrow{b}Z\to Z'[1]$ (respectively, $Z\xrightarrow{a}P\xrightarrow{b}Z'\to Z[1]$) with $Z'\in\cz$ and $P\in\cp$ such that $a$ is a left $\cp$-approximation and $b$ is a right $\cp$-approximation.
\end{itemize}
\end{lemma}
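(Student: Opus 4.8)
The plan is to derive both parts directly from the definitions $\cz=\cx^\perp\cap{}^\perp\cy[1]$ and $\cp=\cx[1]\cap\cy$ together with the co-t-structure hypothesis (T1$^\prime$), namely $\cx[1]\subset\cx$ (equivalently $\cy\subset\cy[-1]$, i.e. $\cy[1]\subset\cy$). For part (a), I would first check $\cp\subset\cz$: an object $P\in\cp$ lies in $\cx[1]$, so since $\cx[1]\subset\cx$ (co-t-structure) we get $P\in\cx$, hence $P\in{}^\perp\cy$; combined with $P\in\cy$ and $\cs=\cx\perp\cy$ being a torsion pair in $\cs$ we need $\Hom_\ct(\cx,P)=0$, which follows from $P\in\cy=\cx^\perp$. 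Similarly $P\in\cx[1]\subset{}^\perp\cy[1]$ after noting $\cy[1]\subset\cy$. So $P\in\cx^\perp\cap{}^\perp\cy[1]=\cz$. The vanishing $\Hom_\ct(\cp,\cz[1])=0$: since $\cp\subset\cy$ and $\cz[1]\subset(\cx^\perp)[1]$, and more usefully $\cz\subset{}^\perp\cy[1]$ gives $\Hom_\ct(\cz,\cy[1])=0$, so $\Hom_\ct(\cz,\cp[1])=0$ because $\cp[1]\subset\cy[1]$. Dually, $\cp\subset\cx[1]$ so $\cp[1]\subset\cx[2]\subset\cx[1]\subset\cx$, wait — better: $\cp\subset\cx[1]$ and $\cz\subset\cx^\perp$ give $\Hom_\ct(\cx,\cz)=0$; then $\Hom_\ct(\cp,\cz[1])\subset\Hom_\ct(\cx[1],\cz[1])\cong\Hom_\ct(\cx,\cz)=0$. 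That settles (a).

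For part (b) I would use the torsion pairs in (T2) applied to $Z\in\cz$. To build the triangle $Z'\to P\to Z\to Z'[1]$: since $\ct={}^\perp\cy\perp\cy$ and we want the middle term in $\cp$, consider instead the torsion pair giving a triangle with a $\cp$-approximation. Concretely, using $\cs=\cx\perp\cy$ one writes a triangle $X'\to Z\to Y'\to X'[1]$? No — $Z$ is generally not in $\cs$. The right move: apply the torsion pair $\ct=\cx\perp\cx^\perp$ to produce approximations by $\cx$, then refine. Actually the cleanest route is via Lemma \ref{cp}: for $Z\in\cz\subset\cx^\perp$, one wants a right $\cp$-approximation $P\to Z$. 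Take the triangle in $\ct={}^\perp\cy\perp\cy$: but to get $P\in\cp=\cx[1]\cap\cy$ as the middle term, I would take a left $\cy$-approximation or use that $\cz\subset{}^\perp\cy[1]$ so $Z[-1]\in{}^\perp\cy$, write the triangle $Y_0[-1]\to Z'\to Z\to Y_0$ hmm. Let me restructure: the key claim is that in the co-t-structure setting $\cp$ is functorially finite in $\cz$, which should follow from the torsion pairs in (T2) restricted appropriately. I would take, for $Z\in\cz$, the $(\cx,\cx^\perp)$-triangle for $Z[1]$, namely $X\to Z[1]\to W\to X[1]$ with $X\in\cx$, $W\in\cx^\perp$; rotating gives $W[-1]\to X\to Z[1]\to W$, hmm not quite landing in $\cp$. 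The honest approach: use that $\cy$ is contravariantly finite (from $\ct={}^\perp\cy\perp\cy$) to get $Y_Z\to Z$? That's the wrong direction too.

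The real main obstacle, and where I'd spend the effort, is showing the middle terms of these approximation triangles actually land in $\cp=\cx[1]\cap\cy$ rather than merely in $\cx$ or $\cy$ separately. I expect the argument to go: for $Z\in\cz$, the torsion pair $\ct={}^\perp\cy\perp\cy$ applied to $Z$ gives $Z\in{}^\perp\cy$ already (since $\cz\subset{}^\perp\cy[1]\subset{}^\perp\cy$ using $\cy[1]\subset\cy$), which is not yet helpful; instead apply $\cs=\cx\perp\cy$ after mapping into $\cs$. The productive statement is Lemma \ref{cp}: $\cy\subset\cp\perp\cy[1]$, so for the $\cy$-part we can peel off a $\cp$. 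Dually, using $\cx[1]\subset\cx\perp\cp$, for $Z\in\cz$ take the $(\cx,\cx^\perp)$-approximation triangle $X\xrightarrow{} Z'' \to Z \to X[1]$ of $Z$ — here $Z\in\cx^\perp$ forces the $\cx$-part to be $0$?? No: $Z\in\cx^\perp$ means $\Hom(\cx,Z)=0$, so the approximation $X\to Z$ is zero, giving $Z''\cong Z\oplus X[1]$... only if the triangle splits, which it does not in general. I think the correct construction takes a left $\cx[1]$-approximation of $Z$ (which exists since $\cx$, hence $\cx[1]$, is covariantly finite by (T2)): $Z\to X_0[1]\to Z_1 \to Z[1]$, then applies Lemma \ref{cp} to replace $X_0[1]\in\cx[1]\subset\cx\perp\cp$ and uses the vanishing from (a) plus $\cx^\perp\cap\cx = 0$ within the relevant piece to conclude the genuinely needed object sits in $\cp$ and that $Z_1\in\cz$. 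Verifying $Z'\in\cz$ at the end is then a routine diagram chase using that $\cz=\cx^\perp\cap{}^\perp\cy[1]$ is closed under the relevant extensions by $\cp\subset\cz$ and the orthogonality in (a). I would write (b) for the left-approximation triangle in detail and obtain the right-approximation triangle by the dual argument using (T2)'s second torsion pair ${}^\perp\cy\perp\cy$ and Lemma \ref{cp}'s second inclusion.
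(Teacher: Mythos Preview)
Your proof of part (a) eventually lands on the right argument, but it contains a genuine error along the way: you state the co-t-structure condition as $\cx[1]\subset\cx$, which is the paper's convention for a \emph{t-structure}. Under (T1$^\prime$) one has $\cx\subset\cx[1]$ (equivalently $\cy[1]\subset\cy$), so your line ``$P\in\cx[1]$, hence $P\in\cx$'' is false. Fortunately the co-t-structure is not needed for (a): $\cp\subset\cy\subset\cx^\perp$ and $\cp\subset\cx[1]\subset{}^\perp(\cy[1])$ (the latter since $\Hom_\ct(\cx[1],\cy[1])\cong\Hom_\ct(\cx,\cy)=0$) give $\cp\subset\cz$ immediately, and your final computations for the two Hom-vanishings are correct. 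The paper simply records (a) as ``clear''.

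Part (b) is where your proposal has a real gap: you never settle on a construction, and the attempts you sketch (left $\cx[1]$-approximations, peeling via Lemma~\ref{cp}, etc.) do not converge. The paper's argument is short and different in spirit from what you try. For the triangle $Z'\to P\to Z\to Z'[1]$, apply the torsion pair $\ct=\cx\perp\cx^\perp$ to $Z[-1]$ and rotate to obtain
\[
T\xrightarrow{a} X[1]\xrightarrow{b} Z\to T[1]\qquad(X\in\cx,\ T\in\cx^\perp).
\]
The point is not to aim directly for $\cp$; the middle term is only known to lie in $\cx[1]$ at this stage. One then checks $T\in{}^\perp\cy[1]$ via the long exact sequence $\Hom_\ct(X[1],\cy[1])\to\Hom_\ct(T,\cy[1])\to\Hom_\ct(Z[-1],\cy[1])$: the left term vanishes since $\Hom_\ct(\cx,\cy)=0$, and the right term vanishes since $Z\in{}^\perp\cy[1]$ and $\cy[2]\subset\cy[1]$ by (T1$^\prime$). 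Hence $T\in\cz$. Now $\cz$ is extension-closed (as an intersection of an aisle and a co-aisle), so $X[1]\in\cz$; combined with $X[1]\in\cx[1]\subset\cs$ this forces $X[1]\in\cs\cap\cz=\cp$. The approximation properties of $a$ and $b$ then follow for free from the vanishings in (a). The other triangle is obtained dually using $\ct={}^\perp\cy\perp\cy$. Lemma~\ref{cp} plays no role here.
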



\begin{proof}
(a) These are clear.

(b) By (T2), there exists a triangle
\[T\xrightarrow{a} X[1]\xrightarrow{b} Z\to T[1]\]
with $X\in\cx$ and $T\in\cx^\perp$. Applying $\Hom_{\ct}(-,\cy[1])$, we have an exact sequence
\[0=\Hom_{\ct}(X[1],\cy[1])\to\Hom_{\ct}(T,\cy[1])\to\Hom_{\ct}(Z,\cy[2])\]
where the right term is zero since $Z\in\cz$ and $\cy[2]\subset\cy[1]$ holds by (T1$^\prime$). Thus $T\in\cx^\perp\cap{}^\perp\cy[1]=\cz$.

Since $T,Z\in\cz$ and $\cz$ is extension-closed, we have $X[1]\in\cx[1]\cap\cz\subset\cs\cap\cz=\cp$.
It is clear that $a$ is a left $\cp$-approximation and that $b$ is a right $\cp$-approximation since $\Hom_{\ct}(Z,\cp[1])=0=\Hom_{\ct}(\cp,Z[1])$ holds by (a).
\end{proof}

Now we are ready to prove Theorem \ref{triangle equivalence}.

(a) By Lemma \ref{mutation pair}, the pair $(\cz,\cz)$ forms a $\cp$-mutation pair in the sense of \cite{IyamaYoshino08}.
In particular, by \cite[Theorem 4.2]{IyamaYoshino08}, the category $\cz/[\cp]$ has the structure of a triangulated category
with respect to the shift functor and triangles given in the statement.
Moreover, it is easy to check that, with respect to this triangulated structure of $\cz/[\cp]$,
the equivalence $\cz/[\cp]\simeq\cu$ in Theorem \ref{equivalence} is a triangle functor.
Thus the assertion follows.

(b) It suffices to prove ${}^\perp\cy[1]=\cx\perp\cz$.

By (T1$^\prime$), we have ${}^\perp\cy[1]\supset{}^\perp\cy\supset\cx$. Thus ${}^\perp\cy[1]\supset\cx\perp\cz$ holds. It remains to show ${}^\perp\cy[1]\subset\cx\perp\cz$.
For any $T\in{}^\perp\cy[1]$, there exists a triangle
\[X\to T\to T'\to X[1]\]
with $X\in\cx$ and $T'\in\cx^\perp$ by (T2). Since $X[1]\in\cx[1]\subset{}^\perp\cy[1]$, we have $T'\in\cx^\perp\cap{}^\perp\cy[1]=\cz$.
Thus $T\in\cx\perp\cz$, and we have the assertion.
\qed

\def\cprime{$'$}
\providecommand{\bysame}{\leavevmode\hbox to3em{\hrulefill}\thinspace}
\providecommand{\MR}{\relax\ifhmode\unskip\space\fi MR }
\providecommand{\MRhref}[2]{%
  \href{http://www.ams.org/mathscinet-getitem?mr=#1}{#2}
}
\providecommand{\href}[2]{#2}

\end{document}